\numberwithin{equation}{section}
\DeclareMathOperator{\Ric}{Ric}
\DeclareMathOperator{\Lip}{Lip}
\newcommand{\sff}{{\rm II}}
\newcommand{\B}{\mathbb{B}}
\newcommand{\R}{\mathbb{R}}
\renewcommand{\S}{\mathbb{S}}
\newcommand{\T}{\mathbb{T}}
\newcommand{\cA}{\mathcal A}
\newcommand{\cH}{\mathcal H}
\newcommand{\cQ}{\mathcal Q}
\newtheorem{theorem}{Theorem}[section]
\newtheorem{lemma}[theorem]{Lemma}
\newtheorem{corollary}[theorem]{Corollary}
\newtheorem{conjecture}[theorem]{Conjecture}
\newtheorem{proposition}[theorem]{Proposition}
\theoremstyle{definition}
\newtheorem{definition}[theorem]{Definition}
\newtheorem{remark}[theorem]{Remark}
\newcommand{\Extend}[5]{\ext@arrow0099{\arrowfill@#1#2#3}{#4}{#5}}
\begin{document}

\title[Optimal quadratic decay]{
Optimal decay constant for complete manifolds of positive scalar curvature with quadratic decay}

\author[Shuli Chen]{Shuli Chen}
\address[Shuli Chen]{Department of Mathematics, University of Chicago, 5734 S University Ave, Chicago IL, 60637, United States}
\email{shulichen@uchicago.edu}

\makeatletter 
\@namedef{subjclassname@2020}{\textup{2020} Mathematics Subject Classification} 
\makeatother
\subjclass[2020]{Primary 53C23; Secondary 53C21}

\begin{abstract}
We prove that if an orientable 3-manifold $M$ admits a complete Riemannian metric whose scalar curvature is positive and has at most $C$-quadratic decay at infinity for some $C > \frac{2}{3}$, then it decomposes as a (possibly infinite) connected sum of spherical manifolds and $\mathbb{S}^2\times \mathbb{S}^1$ summands. Consequently, $M$ carries a complete Riemannian metric of uniformly positive scalar curvature. The decay constant $\frac{2}{3}$ is sharp, as demonstrated by metrics on $\mathbb{R}^2 \times \mathbb{S}^1$. This improves a result of Balacheff, Gil Moreno de Mora Sard{\`a}, and Sabourau, and partially answers a conjecture of Gromov. The main tool is a new exhaustion result using $\mu$-bubbles.

In dimensions $n = 4, 5$, we further extend results of Chodosh--Maximo--Mukherjee and Sweeney, and obtain topological obstructions to the existence of a complete Riemannian metric whose scalar curvature is positive and has at most $C$-quadratic decay at infinity for some $C > \frac{n-1}{n}$ on certain noncompact contractible $n$-manifolds. 
\end{abstract}

\maketitle

\section{Introduction}\label{Introduction}
The scalar curvature $R$, defined as the average of the sectional curvatures up to a multiplicative constant, is the weakest curvature invariant of a Riemannian metric.
On the other hand, the existence of positive scalar curvature can still place strong topological constraints on the underlying topology.
In the study of three-dimensional manifolds, a fundamental question is about understanding the topological structure of 3-manifolds that admit a Riemannian metric of positive scalar curvature $R > 0$.

In the closed case, the classical Kneser--Milnor prime decomposition theorem \cite{kneser1929geschlossene, milnor1962unique} combined with Perelman's resolution of the Poincar\'e conjecture \cite{Perelman2002,Perelman2003a,Perelman2003b} implies that every closed orientable $3$-manifold $M$ can be uniquely decomposed into prime factors as
$$M = X_1 \# \dots X_\ell \# (\S^2 \times \S^1) \# \dots \# (\S^2 \times \S^1) \# K_1 \# \dots \# K_m,$$
where each $X_i$ is a spherical manifold and each $K_k$ is an aspherical manifold. Recall that a closed 3-manifold $M$ is \emph{prime} if $M = M' \# M''$ implies that either $M'$ or $M''$ is diffeomorphic to $\S^3$. Here, a closed 3-manifold is \emph{spherical} if it is of the form $\S^3/\Gamma_i$ obtained as the quotient of the 3-sphere by a subgroup $\Gamma_i < O(4)$ of isometries acting freely on $\S^3$, and a manifold is \emph{aspherical} if it has contractible universal cover.
Perelman \cite{Perelman2002,Perelman2003a,Perelman2003b}, completing pioneering work of Schoen and Yau \cite{SY1979existence} and Gromov and Lawson \cite{gromov1980spin,gromov1980classification,gromov1983positive}, showed that a closed orientable 3-manifold admits a Riemannian metric with positive scalar curvature if and only if it decomposes as a connected sum of only spherical manifolds and $\S^2 \times \S^1$ summands.

In contrast, for open 3-manifolds, the topological structure is much more complicated, and naive generalizations of Kneser--Milnor prime decomposition do not hold in general \cite{scott1977fundamental,maillot2008some}. Nevertheless, if we strengthen the curvature condition from positive scalar curvature $R > 0$ to uniformly positive scalar curvature $R \ge 1$, then a similar decomposition theorem has recently been proved for open 3-manifolds admitting complete Riemannian metrics of uniformly positive scalar curvature. Namely, such manifolds decompose as a possibly infinite connected sum of spherical manifolds and $\S^2 \times \S^1$ summands. In this direction, Chang, Weinberger, and Yu \cite{chang2010taming} proved the decomposition for manifolds with finitely generated fundamental group using K-theory methods. Bessi\`eres, Besson, and Maillot \cite{bessieres2011ricci} showed the result for manifolds with bounded geometry via Ricci flow. 
Finally, Gromov \cite{Gromov2023FourLectures} and Wang
\cite{wang2022topology} independently established the general case, using $\mu$-bubble technique introduced by Gromov \cite{gromov1996positive}.

One may wish to consider a weaker scalar curvature assumption and ask for a decomposition theorem. In \cite{Balacheff2025}, Balacheff, Gil Moreno de Mora Sard{\`a}, and Sabourau extended the decomposition theorem to complete Riemannian 3-manifolds of positive scalar curvature with at most $C$-quadratic decay at infinity for some constant $C > 64 \pi^2$. Their proof is deduced from results on fill radius instead of via $\mu$-bubbles.

\begin{definition}[{\cite[Definition 1.2]{Balacheff2025}{\cite[Definition 1.3.5]{gil2025}}}]\label{de:decay_infinity}
	Let $M$ be a complete Riemannian $n$-dimensional manifold. Fix a basepoint $x \in M$, and denote by $r_x(y) = d(x,y)$ the distance function to $x$.
	The scalar curvature $R$ of $M$ has \emph{at most $C$-quadratic decay at infinity} if there exists a constant $D_0 > 0$ such that for every $y \in M$ with $r_x(y) \geq D_0$,
	\begin{equation*}
		R(y) > \frac{C}{r_x(y)^2}.
	\end{equation*}
\end{definition}
\begin{remark}

The constant $C$ in Definition \ref{de:decay_infinity} may depend on the basepoint $x$.
\footnote{This dependency was first noticed by Laurent Bessi\`{e}res and was pointed out to me by  St{\'e}phane Sabourau. See also \cite[Definition 1.3.5]{gil2025}.}
However, by replacing $C$ with any constant $C' < C$ arbitrarily close to $C$, a similar inequality holds for any other choice of basepoint by simply changing the constant $D_0$. Since in this paper we are only interested in conditions of the form ``for some $C > C_0$'' or ``for every $0 < C< C_0$" where $C_0$ is a fixed constant, the basepoint can be taken arbitrarily. Thus we will often not mention the basepoint for simplicity of the statements. 
\end{remark}

\begin{theorem}[{\cite[Theorem 1.3]{Balacheff2025}}] \label{thm:B}
	Let $M$ be a complete connected orientable Riemannian 3-manifold. Suppose that $M$ has positive scalar curvature with at most $C$-quadratic decay at infinity for some $C > 64 \pi^2$.
	Then $M$ decomposes as a possibly infinite connected sum of spherical manifolds and $\S^2 \times \S^1$ summands.
\end{theorem}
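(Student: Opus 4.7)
My plan is to prove the theorem by constructing a $\mu$-bubble exhaustion of $M$ by compact $3$-submanifolds with spherical boundary, in the spirit of the Gromov \cite{Gromov2023FourLectures} and Wang \cite{wang2022topology} arguments in the uniformly positive scalar curvature case. Once such an exhaustion $M = \bigcup_n K_n$ is available---with each $\partial K_n$ a disjoint union of $2$-spheres---the theorem follows by capping the spherical boundary components of $K_n$ with $3$-balls to obtain closed orientable $3$-manifolds $\widehat K_n$ carrying positive scalar curvature metrics (via a standard metric modification near the caps), invoking Perelman's classification for each $\widehat K_n$, and assembling the resulting prime decompositions via a direct-limit argument to produce the possibly infinite connected sum decomposition of $M$.

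To produce a single level of the exhaustion, fix a basepoint $x_0 \in M$ and, for sufficiently large radii $r_1 < r_2$ with bounded ratio $r_2/r_1$, consider the annular shell $\Omega = \{r_1 < r_{x_0} < r_2\}$. On $\Omega$ I would choose a smooth prescribed mean curvature function $h$ depending only on $r_{x_0}$, blowing up to $+\infty$ on the inner boundary $\{r_{x_0} = r_1\}$ and to $-\infty$ on the outer boundary $\{r_{x_0} = r_2\}$. A standard barrier argument then yields a smooth, two-sided, stable $\mu$-bubble $\Sigma \subset \Omega$ separating the two boundaries.

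The key step is to combine the $\mu$-bubble stability inequality (with a suitable, possibly non-constant, test function) with the Gauss equation and Gauss--Bonnet applied to each connected component $\Sigma_i$ of $\Sigma$, obtaining an inequality of the schematic form
\begin{equation*}
4\pi \chi(\Sigma_i) \,\geq\, \int_{\Sigma_i} \Phi\!\left( R_M,\, h,\, \partial_\nu h \right) dA,
\end{equation*}
where $\Phi$ is linear in $R_M$ and $\partial_\nu h$ and quadratic in $h$. Substituting the quadratic bound $R_M > C/r_{x_0}^2$ on $\Omega$ and an appropriate profile $h = O(1/r_{x_0})$, one arranges the right-hand side to be strictly positive provided $C$ exceeds a critical threshold; for $C > 64\pi^2$ this is achievable, forcing $\chi(\Sigma_i) > 0$ and hence $\Sigma_i \cong \mathbb{S}^2$ for every component. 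Letting $r_1 \to \infty$ and iterating produces the desired spherical exhaustion.

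I expect the main obstacle to be the quantitative balance in the last step: optimizing the profile of $h$, and possibly the test function used in stability, so that the quadratic lower bound on $R_M$ dominates the negative contributions from $h^2$ and $\partial_\nu h$, with $C$ as small as possible. The threshold $64\pi^2$ likely reflects a somewhat loose application of Young's inequality in this balance; any improvement---such as to the sharp value $\tfrac{2}{3}$ announced in the paper's abstract---must come from a more delicate weighted stability inequality and an optimal profile for $h$.
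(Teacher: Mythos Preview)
Your $\mu$-bubble approach is essentially the method the present paper uses---not for Theorem~\ref{thm:B} itself, which is merely quoted from \cite{Balacheff2025} (whose original proof goes through fill radius estimates rather than $\mu$-bubbles), but for the stronger Theorem~\ref{thm:main}. The paper's Proposition~\ref{prop:exhaustion} is exactly your annular $\mu$-bubble exhaustion, with the profile $h$ made precise by solving the Riccati equation $2f' - \tfrac{n}{n-1}f^2 - \tfrac{C}{r^2}=0$; the explicit solution is what pushes the threshold all the way down to $\tfrac{2}{3}$, confirming your remark that $64\pi^2$ reflects slack in the estimates.

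There is one genuine soft spot in your assembly step. You propose to cap off each $K_n$ by $3$-balls, claim that a ``standard metric modification near the caps'' yields a positive scalar curvature metric on the closed manifold $\widehat K_n$, and then invoke Perelman's classification. Extending the ambient metric across the caps while preserving $R>0$ is \emph{not} routine: the $\mu$-bubble spheres have prescribed mean curvature $H=h$ of indeterminate sign, and there is no off-the-shelf lemma guaranteeing a PSC fill-in of an arbitrary $2$-sphere boundary. The paper sidesteps this entirely: it caps the \emph{annular} pieces $Y_i = K_i \setminus K_{i-1}$ to get closed $3$-manifolds $\widehat Y_{ij}$, applies only Kneser--Milnor (no curvature needed) to write each as a connected sum of primes, and then rules out aspherical prime summands by appealing to \cite[Theorem~1.7]{ccz2024aspherical} applied to the original complete PSC metric on $M$. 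This also makes the graph modelling the infinite connected sum immediate, whereas your direct-limit of prime decompositions of the nested $\widehat K_n$ would require a separate compatibility argument.
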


This theorem is closely related to the following conjecture of Gromov \cite[Section 3.6.1]{Gromov2023FourLectures}.
\begin{conjecture}[{Critical Rate of Decay Conjecture \cite{Gromov2023FourLectures}}] \label{conj:critical_rate}
	There exists a universal critical constant $C_n > 0$ such that the following holds. Let $M$ be an orientable $n$-manifold that admits a complete Riemannian metric of positive scalar curvature. 
	\begin{enumerate}
	\item For every $C < C_n$, there exists a complete Riemannian metric on $M$ of positive scalar curvature with at most $C$-quadratic decay at infinity.
	\item If $M$ admits a complete Riemannian metric with positive scalar curvature with $C$-quadratic decay at infinity for $C > C_n$, then $M$ admits a complete Riemannian metric with uniformly positive scalar curvature. \label{it:co:critical_rate}
	\end{enumerate}
\end{conjecture}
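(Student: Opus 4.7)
Since Conjecture \ref{conj:critical_rate} concerns every dimension and includes both an existence and a rigidity statement, a full resolution is out of reach; the realistic target---and what the paper pursues---is part (\ref{it:co:critical_rate}) in dimension $n=3$ with the sharp value $C_3 = \tfrac{2}{3}$, together with the sharpness witnessed by $\R^2 \times \S^1$. My plan follows the template of Gromov \cite{Gromov2023FourLectures} and Wang \cite{wang2022topology} from the uniformly positive regime: reduce the decomposition statement to producing an exhaustion of $M$ by compact domains $\Omega_1 \subset \Omega_2 \subset \cdots$ whose boundaries are disjoint unions of $2$-spheres, and then apply their cut-and-paste argument to assemble the connected-sum decomposition. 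Given that decomposition, a Gromov--Lawson surgery upgrades the metric to one of uniformly positive scalar curvature, completing part (\ref{it:co:critical_rate}).

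The new ingredient is the exhaustion itself in the quadratic-decay regime. For each large $R$, I would consider the warped area functional
$$\cA_h(\Omega) = \mathcal{H}^2(\partial^* \Omega) - \int_\Omega h \, d\vol,$$
minimized over Caccioppoli sets $\Omega$ trapped between the geodesic spheres at radii $R$ and $2R$. The prescribed mean-curvature function $h$ would be taken radial, $h(r) = \psi(r)/r$, with $\psi$ slightly greater than $2$ on the outer barrier and slightly less than $2$ on the inner barrier, so that the two geodesic spheres serve as one-sided barriers and a stable smooth $\mu$-bubble $\Sigma_R$ appears strictly inside the annulus.

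The decisive step is the stability analysis on $\Sigma_R$. Combining the standard second variation of $\cA_h$ with the Gauss equation in dimension three and a radial conformal weight $u(r)$ as test function should yield a Gauss--Bonnet-type bound
$$2\pi \chi(\Sigma_R) \ge \int_{\Sigma_R} \bigl(\tfrac{1}{2}R_M - Q(\psi, u, \nabla u)\bigr) u^2 \, dA,$$
where $Q$ collects the contributions from $h^2$, $\partial_\nu h$ and the Rayleigh quotient of $u$. Inserting the hypothesis $R_M > C/r_x^2$ and optimizing jointly over $\psi$ and $u$ should isolate the threshold $C > \tfrac{2}{3}$ above which the right-hand side is strictly positive; this would force $\chi(\Sigma_R) > 0$, so that every component of $\Sigma_R$ is a $2$-sphere and the exhaustion is produced.

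For sharpness I would construct on $\R^2 \times \S^1$ a rotationally symmetric warped metric $dr^2 + f(r)^2 d\theta_1^2 + g(r)^2 d\theta_2^2$ with $R \sim (\tfrac{2}{3} - \varepsilon)/r^2$ at infinity; since $\R^2 \times \S^1$ admits no complete metric of uniformly positive scalar curvature (by standard enlargeability obstructions applied to its incompressible torus), such a metric rules out any $C_3 < \tfrac{2}{3}$. The main obstacle, in my view, is the coupled optimization underlying the stability inequality: the trial function $\phi \equiv 1$ produces a strictly weaker constant, and the sharp value $\tfrac{n-1}{n}$ only emerges after simultaneously solving an ODE for $u$ matched to a carefully calibrated profile of $\psi$. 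A related technical difficulty is an \emph{a priori} containment estimate ensuring that $\Sigma_R$ genuinely lies inside the designated annulus as $R \to \infty$---neither collapsing onto the inner barrier nor drifting out to infinity---which is likely the technical heart of the ``new exhaustion result'' advertised in the abstract.
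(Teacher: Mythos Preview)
Your high-level plan---$\mu$-bubble exhaustion with spherical boundaries, then the Wang/Gromov cut-and-paste decomposition, then Gromov--Lawson surgery to uniformly positive scalar curvature---matches the paper exactly. The divergence, and the genuine gap, is in how the sharp constant $\tfrac{n-1}{n}$ is extracted, and your diagnosis there is inverted. You assert that the constant test function $\phi\equiv 1$ yields only a suboptimal threshold and that one must jointly optimize a radial weight $u$ together with the profile $\psi$ of $h=\psi(r)/r$. In fact the paper uses $\psi\equiv 1$ as the test function and places the entire burden on $h$: one sets $h=-f(\rho)$ where $f$ solves the Riccati equation $2f'=\tfrac{n}{n-1}f^2+\tfrac{\tilde C}{r^2}$ for some $\tfrac{n-1}{n}<\tilde C<C$. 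The explicit solution is $f(r)=\tfrac{2(n-1)}{nr}\bigl[-\tfrac12+\mu\tan(\mu\log r)\bigr]$ with $\mu=\tfrac12\sqrt{\tfrac{n\tilde C}{n-1}-1}$, and it exists precisely when $\tilde C>\tfrac{n-1}{n}$---this is the oscillation threshold of the associated Euler ODE $u''+\tfrac{n\tilde C}{4(n-1)r^2}u=0$. By construction $-2|D_M h|+\tfrac{n}{n-1}h^2+R_M>0$ pointwise, so the rearranged stability inequality with constant test function already gives $\int_\Sigma R_\Sigma>0$.

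This same choice of $h$ dissolves the containment problem you flag at the end. Because $\tan(\mu\log r)$ blows up at both ends of the annulus $\{e^{(k\pi-\pi/2)/\mu}\le \rho/(1+\varepsilon)\le e^{(k\pi+\pi/2)/\mu}\}$, one has $h\to\pm\infty$ on the two boundary components, and the standard existence result (Zhu, Chodosh--Li) then produces a minimizer strictly in the interior with no comparison to geodesic spheres required. Your bounded-$h$ ansatz with geodesic-sphere barriers would need exactly such a comparison, and no control on the mean curvature of distance spheres is available under a mere scalar-curvature hypothesis; that is the step that would fail. For the sharpness side, the paper's construction on $\R^2\times\T^{n-2}$ is a single warped product $dr^2+c^2 r^{2\alpha}(d\theta^2+g_{\T^{n-2}})$ with $\alpha\in(0,\tfrac1n]$, glued to a round cap near $r=0$; its scalar curvature is $r^{-2}\bigl(-n(n-1)(\alpha-\tfrac1n)^2+\tfrac{n-1}{n}\bigr)$, which makes the constant $\tfrac{n-1}{n}$ visible directly.
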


In this paper, we improve the decay constant in Theorem \ref{thm:B} and find the optimal one to be $\frac{2}{3}$.

\begin{theorem} \label{thm:main}
	Let $M$ be a complete connected orientable Riemannian 3-manifold. Suppose that $M$ has positive scalar curvature with at most $C$-quadratic decay at infinity for some $C > \frac{2}{3}$.
	Then $M$ decomposes as a possibly infinite connected sum of spherical manifolds and $\S^2 \times \S^1$ summands.

    The constant $\frac{2}{3}$ is optimal; the manifold $\R^2 \times \S^1$ admits a complete metric of positive scalar curvature with at most $C$-quadratic decay for every $0 < C < \frac{2}{3}$.
\end{theorem}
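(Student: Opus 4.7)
\medskip
\noindent\emph{Proof plan.}
The strategy is to construct an exhaustion of $M$ by compact subdomains whose boundaries consist of $2$-spheres, using $\mu$-bubbles with a carefully chosen radial prescribed mean curvature, and then reduce the topological classification to Perelman's theorem for closed PSC $3$-manifolds via capping. For the sharpness of $2/3$, I will exhibit an explicit doubly-warped product metric on $\R^2 \times \S^1$. The constant $2/3$ will emerge as the critical value of a single one-variable quadratic appearing in the $\mu$-bubble stability inequality.

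For the exhaustion, fix a basepoint $x_0$, let $r(x) = d(x_0, x)$, and for each large $r_1 > D_0$ pick a much larger $r_2$. On the annular region $\{r_1 \le r \le r_2\}$ I will prescribe a mean curvature function essentially equal to $h(x) = \tfrac{2}{3 r(x)}$, modified near the two spherical boundaries so that $h$ exceeds the mean curvature of $\partial B_{r_1}$ (with respect to the outward normal) and lies below that of $\partial B_{r_2}$, thereby acting as a two-sided barrier. Minimizing the $\mu$-bubble functional $\mathcal{A}(\Omega) = \mathrm{Area}(\partial \Omega) - \int_\Omega h\, dV$ over Caccioppoli sets $\Omega$ with $B_{r_1}(x_0) \subset \Omega \subset B_{r_2}(x_0)$ then yields, by the standard existence and regularity theory for $\mu$-bubbles, a smooth interior stable $\Sigma = \partial \Omega$ with $H_\Sigma = h$.

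I next claim that every component $\Sigma'$ of $\Sigma$ is a $2$-sphere. Stability with test function $\phi \equiv 1$, combined with the Gauss identity $\Ric(\nu,\nu) + |A|^2 = \tfrac{1}{2}R_M - K_{\Sigma'} + \tfrac{1}{2}H^2 + \tfrac{1}{2}|A|^2$ and the surface decomposition $|A|^2 = \tfrac{1}{2}H^2 + |\mathring{A}|^2$, gives after Gauss--Bonnet
\begin{equation*}
2\pi \chi(\Sigma') \;\ge\; \int_{\Sigma'}\left(\tfrac{1}{2}R_M + \tfrac{3}{4}h^2 + \tfrac{1}{2}|\mathring{A}|^2 + \partial_\nu h\right).
\end{equation*}
Substituting $R_M \ge C/r^2$, $h = 2/(3r)$, and the pointwise bound $|\partial_\nu h| \le |h'(r)| = 2/(3 r^2)$, the right-hand side dominates $\int_{\Sigma'} \tfrac{3C-2}{6 r^2}$, which is strictly positive as soon as $C > 2/3$. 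Hence $\chi(\Sigma') > 0$, and orientability of $M$ forces $\Sigma'$ to be a $2$-sphere. The number $2/3$ is the threshold below which the quadratic $\alpha \mapsto \tfrac{3}{4}\alpha^2 - \alpha + \tfrac{C}{2}$ (arising for the ansatz $h = \alpha/r$) fails to be strictly positive; its unique minimum at $\alpha = 2/3$ pins down the coefficient in $h$.

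Letting $r_1 \to \infty$ and nesting produces an exhaustion of $M$ by compact subdomains $\Omega_k$ whose boundaries are disjoint unions of $2$-spheres of positive mean curvature. Capping these spheres off by the mean-convex cap construction of Gromov--Lawson yields closed orientable $3$-manifolds $\hat\Omega_k$ admitting positive scalar curvature, each of which decomposes by Perelman as a connected sum of spherical manifolds and $\S^2 \times \S^1$'s; removing the caps and passing to the direct limit recovers the stated decomposition of $M$. For the sharpness, on $\R^2 \times \S^1$ take the doubly-warped product $g = dr^2 + \phi(r)^2 d\theta^2 + \psi(r)^2 d\alpha^2$ with $\phi(0) = 0$, $\phi'(0) = 1$, $\psi$ even at $0$, and $\phi(r), \psi(r) \sim r^{1/3}$ at infinity; the formula $R = -2(\phi''/\phi + \psi''/\psi + \phi' \psi' / (\phi\psi))$ gives $R \to \tfrac{2}{3 r^2}$ asymptotically, and an interpolation near $r = 0$ (using, e.g., a concave bump in $\psi$) keeps $R > 0$ throughout. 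The main obstacle in the whole plan is the existence and regularity of the stable $\mu$-bubble minimizer with the singular radial weight $h = 2/(3r)$ while the barrier conditions are maintained; once that step is secured, the stability computation is the crux and the topological reduction is standard.
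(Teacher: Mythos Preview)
Your overall architecture—$\mu$-bubble exhaustion, stability forces $\chi(\Sigma')>0$, cap and classify—matches the paper, and your stability computation with $h=\tfrac{2}{3r}$ correctly isolates the threshold $\tfrac{3C-2}{6r^2}$. The genuine gap is exactly the one you flag at the end: existence of the minimizer. Your proposed barrier mechanism cannot work as stated. Geodesic spheres $\partial B_{r_i}$ in a manifold with only a scalar curvature lower bound have no mean curvature control whatsoever (and need not even be smooth), so you cannot ``modify $h$ near the boundaries'' to match them. And any modification that forces $h\to\pm\infty$ must still satisfy the pointwise inequality $\tfrac{3}{4}h^2+\tfrac{C}{2r^2}\ge |h'|$ \emph{throughout the transition region}; your constant-coefficient ansatz $h=\alpha/r$ sits exactly on the borderline and has no room to be bent upward.

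The paper resolves this by observing that the borderline inequality is the Riccati equation
\[
2f'=\tfrac{n}{n-1}f^2+\tfrac{\tilde C}{r^2},\qquad \tfrac{n-1}{n}<\tilde C<C,
\]
and solving it explicitly: for $\tilde C>\tfrac{n-1}{n}$ the solution is $f(r)=\tfrac{2(n-1)}{nr}\bigl[-\tfrac12+\mu\tan(\mu\log r)\bigr]$ with $\mu=\tfrac12\sqrt{\tfrac{n\tilde C}{n-1}-1}$. This $f$ blows up to $\pm\infty$ at the endpoints of each interval $r\in\bigl(e^{(k\pi-\pi/2)/\mu},\,e^{(k\pi+\pi/2)/\mu}\bigr)$, so setting $h=-f(\rho/(1+\varepsilon))$ (with $\rho$ a Greene--Wu smoothing of the distance) \emph{automatically} furnishes the two barriers while, by construction, saturating the very inequality you need. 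The oscillatory (tangent-type) behavior exists precisely when the indicial discriminant $1-\tfrac{nC}{n-1}$ is negative, i.e.\ when $C>\tfrac{n-1}{n}$; this is where the sharp constant really lives, not merely in the algebra $\min_\alpha\bigl(\alpha-\tfrac34\alpha^2\bigr)$ that you wrote down.

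A secondary point: your topological reduction (cap the mean-convex $\Omega_k$ to a closed PSC manifold and invoke Perelman) is plausible for each $\hat\Omega_k$ individually, but passing to an infinite connected sum decomposition of $M$ from the nested sequence is not immediate—the annular pieces $\Omega_{k+1}\setminus\Omega_k$ have one mean-\emph{concave} boundary, so you cannot cap them with PSC. The paper instead caps the pieces only topologically, applies Kneser--Milnor, and then invokes a separate theorem (complete PSC on $M$ forbids aspherical summands) to exclude the aspherical primes. Your sharpness construction on $\R^2\times\S^1$ is essentially the same as the paper's.
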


Note that $\R^2 \times \S^1$ does not decompose as a connected sum of spherical manifolds and $\S^2 \times \S^1$ (see e.g. \cite[Proposition 8.2]{Balacheff2025}).

The following rigidity result, which addresses the case \eqref{it:co:critical_rate} of Conjecture \ref{conj:critical_rate}, is a direct consequence of Theorem \ref{thm:main} and an adaptation of Gromov--Lawson's Surgery Theorem \cite[Theorem A]{gromov1980classification}. In particular, we find $C_3 = \frac{2}{3}$.

\begin{corollary} \label{cor:surgery_uniform_scal}
Let $M$ be an orientable 3-manifold. If $M$ admits a complete Riemannian metric of positive scalar curvature with at most $C$-quadratic decay at infinity for some $C > \frac{2}{3}$ with respect to some basepoint, it also admits a complete Riemannian metric with uniformly positive scalar curvature. The constant $C_3 = \frac{2}{3}$ is optimal here.
\end{corollary}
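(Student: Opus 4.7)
The plan is to split the corollary into a \emph{sufficiency} direction ($C > \tfrac{2}{3}$ yields a uniformly positive scalar curvature metric) and an \emph{optimality} direction (the constant $\tfrac{2}{3}$ cannot be improved).

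For sufficiency, first apply Theorem \ref{thm:main} to conclude that $M$ decomposes as a possibly infinite connected sum of spherical manifolds $\S^3/\Gamma_i$ and $\S^2 \times \S^1$ summands. Each such summand carries a canonical metric of constant positive scalar curvature (the round quotient in the spherical case and the product of the round $\S^2$ with a round $\S^1$). The original Gromov--Lawson surgery construction \cite[Theorem A]{gromov1980classification} is local near each connected-sum sphere and produces positive scalar curvature metrics on finite connected sums. Adapt this to the countably infinite case by arranging the summands in a locally finite pattern and performing the surgery with a uniform scale parameter at each gluing sphere. The resulting metric is complete and has scalar curvature bounded uniformly below by a positive constant.

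For optimality, the second assertion of Theorem \ref{thm:main} provides, for every $0 < C < \tfrac{2}{3}$, a complete metric on $\R^2 \times \S^1$ of positive scalar curvature with $C$-quadratic decay. If every such metric could always be improved to a complete metric with uniformly positive scalar curvature, then by the decomposition theorem of Gromov \cite{Gromov2023FourLectures} and Wang \cite{wang2022topology}, $\R^2 \times \S^1$ would itself decompose as a connected sum of spherical manifolds and $\S^2 \times \S^1$ summands, contradicting \cite[Proposition 8.2]{Balacheff2025}. Hence $C_3 = \tfrac{2}{3}$ is optimal.

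The main obstacle is the countably infinite surgery in the sufficiency direction: one must preserve both completeness and a uniform scalar curvature lower bound across all surgery modifications simultaneously. The necessary locally finite organization of the summands and the accompanying uniform estimates are essentially contained in the existing treatments of the uniformly positive scalar curvature decomposition results (Gromov \cite{Gromov2023FourLectures}, Wang \cite{wang2022topology}), so the adaptation of Gromov--Lawson surgery here should proceed along the lines of those arguments.
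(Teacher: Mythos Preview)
Your proposal is correct and follows essentially the same approach as the paper, which states the corollary as a direct consequence of Theorem~\ref{thm:main} together with an adaptation of the Gromov--Lawson surgery theorem to (possibly infinite) connected sums. The only minor difference is that for optimality the paper cites \cite[p.~648]{Gromov2018} directly for the nonexistence of a complete uniformly positive scalar curvature metric on $\R^2\times\S^1$, whereas you route this through the Gromov/Wang decomposition theorem and \cite[Proposition~8.2]{Balacheff2025}; both arguments are valid.
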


Unlike \cite{Balacheff2025}, our main tool is the method of $\mu$-bubbles. Our main theorem is a consequence of the following construction result and new exhaustion result. 

\begin{proposition}\label{prop:construction}
For every $n \ge 3$, the $n$-manifold $\R^2 \times \T^{n-2}$ admits a complete metric of positive scalar curvature with at most $C$-quadratic decay for every $0 < C < \frac{n-1}{n}$.    
\end{proposition}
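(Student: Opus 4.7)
The plan is to exhibit an explicit doubly warped product metric on $\R^2 \times \T^{n-2}$ realizing the claimed asymptotic decay. Using polar coordinates $(r, \theta)$ on $\R^2$ and a flat metric $g_{\T^{n-2}}$ on the torus factor, I will work with the ansatz
\[
g = dr^2 + f(r)^2 \, d\theta^2 + h(r)^2 \, g_{\T^{n-2}},
\]
where smoothness on all of $\R^2 \times \T^{n-2}$ requires $f$ to be odd in $r$ with $f'(0) = 1$ and $h$ to be even in $r$ with $h(0) > 0$. The scalar curvature of such a doubly warped product is
\[
R = -\frac{2 f''}{f} - \frac{2(n-2) h''}{h} - (n-2)(n-3)\,\frac{(h')^2}{h^2} - \frac{2(n-2) f' h'}{f h}.
\]

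To identify the optimal asymptotic rate, I first substitute the pure power ansatz $f(r) = r^\alpha$, $h(r) = r^\beta$, which expresses $R \cdot r^2$ as the quadratic $-2\alpha^2 + 2\alpha + 2(n-2)\beta - (n-2)(n-1)\beta^2 - 2(n-2)\alpha\beta$ in $(\alpha, \beta)$. Solving the two linear critical-point equations yields the unique maximum at $\alpha = \beta = 1/n$, with value exactly $(n-1)/n$. This identifies the candidate asymptotics $f(r), h(r) \sim r^{1/n}$, and motivates the explicit smooth interpolant
\[
f(r) = r\,(1+r^2)^{-\frac{n-1}{2n}}, \qquad h(r) = (1+r^2)^{\frac{1}{2n}}.
\]
Both factors have the correct parity at $r = 0$ (so the metric extends smoothly across the collapsing $\S^1$ at the origin of $\R^2$) and recover $r^{1/n}$-asymptotics at infinity.

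Next I will compute $R$ directly for this choice. Setting $u = 1 + r^2$, differentiation gives
\[
\frac{f''}{f} = -\frac{(n-1)(3n+r^2)}{n^2 u^2}, \qquad \frac{h''}{h} = \frac{n - (n-1) r^2}{n^2 u^2},
\]
together with $f'/f = 1/r - (n-1)r/(nu)$ and $h'/h = r/(nu)$. Assembling the four terms of the scalar curvature formula over the common denominator $n^2 u^2$, I expect significant cancellation to produce
\[
R(r) = \frac{2(n+1) + (n-1) r^2}{n(1+r^2)^2},
\]
which is manifestly positive for every $r \ge 0$ (both coefficients in the numerator are positive when $n \ge 3$) and satisfies $R(r) \sim (n-1)/(n r^2)$ as $r \to \infty$.

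Finally, completeness is immediate since $r$ ranges over $[0, \infty)$ with radial factor $dr^2$. For the decay condition, fix a basepoint $x$ on the central torus $\{r = 0\} \cong \T^{n-2}$; the lower bound $r_x(y) \ge r$ follows from the fact that the radial coordinate is $1$-Lipschitz, while the upper bound $r_x(y) \le r + \diam(h(0)^2 g_{\T^{n-2}})$ follows from first moving radially to $r = 0$ and then within the central torus. Hence $r_x(y) = r + O(1)$, so $R(y) \cdot r_x(y)^2 \to (n-1)/n$, yielding $R(y) > C/r_x(y)^2$ for all sufficiently distant $y$ and every $C < (n-1)/n$. The main obstacle I anticipate is purely computational: the four contributions to the scalar curvature must combine through precise cancellations to yield the clean numerator $2(n+1) + (n-1) r^2$, and carrying this algebra through without error is where most of the work lies.
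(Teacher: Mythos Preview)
Your construction is correct, and the explicit scalar curvature formula
\[
R(r) = \frac{2(n+1) + (n-1)r^2}{n(1+r^2)^2}
\]
checks out term by term. The parity conditions on $f$ and $h$ guarantee smoothness at the origin, positivity of $R$ is manifest, completeness is clear, and the distance estimate $r_x(y) = r + O(1)$ yields the claimed $C$-quadratic decay for every $C < \frac{n-1}{n}$.

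Your route, however, is genuinely different from the paper's. The paper does not produce a single globally smooth metric; instead it works with a pure power-law \emph{singly} warped end $g_+ = dr^2 + c^2 r^{2\alpha}\bigl(d\theta^2 + g_{\T^{n-2}}\bigr)$, reads off the optimal exponent $\alpha = 1/n$ from the one-variable maximization, and then caps this end with a round-sphere-times-flat-torus piece $g_- = dr^2 + \sin^2(r)\,d\theta^2 + \sin^2(r_0)\,g_{\T^{n-2}}$. The two pieces are glued across $\{r = r_0\}$ using the mean curvature comparison $H_- > H_+$ together with the Brendle--Marques--Neves deformation theorem to smooth the corner while preserving positive scalar curvature. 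Your approach trades this gluing-plus-external-theorem step for a heavier but entirely self-contained calculation: the doubly warped interpolants $f(r) = r(1+r^2)^{-(n-1)/(2n)}$, $h(r) = (1+r^2)^{1/(2n)}$ carry the correct parities at $r=0$ and the correct $r^{1/n}$ asymptotics simultaneously, so no patching is needed. The paper's modular construction makes the origin of the constant $\frac{n-1}{n}$ transparent at the level of the end alone and reuses a standard gluing toolkit; your single-metric construction is more elementary and yields one metric that works for all $C < \frac{n-1}{n}$ at once.
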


Note that $\R^2 \times \T^{n-2}$ does not carry a complete metric of uniformly positive scalar curvature \cite[p. 648]{Gromov2018}. Thus Proposition \ref{prop:construction} implies that the critical constant $C_n$ in Conjecture \ref{conj:critical_rate} must satisfy $C_n \ge \frac{n-1}{n}$. Moreover, in view of the proposition below, we conjecture that $C_n  = \frac{n-1}{n}$. 

\begin{proposition}\label{prop:exhaustion}
Let $3 \le n \le 7$. Suppose $M$ is a complete orientable $n$-manifold. Suppose that $M$ has positive scalar curvature with at most $C$-quadratic decay at infinity for some $C > \frac{n-1}{n}$.
Then $M$ admits an exhaustion $K_1 \subset K_2 \subset \dots \subset M$ such that $M = \cup_{i=1}^\infty K_i$, each $K_i$ is a compact codimension-0 smooth submanifold with smooth boundary $\partial K_i$, and $\partial K_i$ is a compact orientable $(n-1)$-manifold that admits a metric of positive scalar curvature. 
\end{proposition}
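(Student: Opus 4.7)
The proof uses the $\mu$-bubble method. Fix a basepoint $x \in M$, let $r = r_x$, and write $C = \tfrac{n-1}{n} + 2\epsilon$ with $\epsilon > 0$. The strategy is: for each sufficiently large $R$, construct a smooth stable $\mu$-bubble $\Sigma_R$ inside the annulus $A_R := \{R < r < \alpha R\}$, where $\alpha = \alpha(\epsilon) > 1$ depends only on $\epsilon$. Taking $K_R$ to be the closure of the bounded component of $M \setminus \Sigma_R$ (which contains $x$ and all of $\{r \le R\}$), the family $\{K_R\}_{R \to \infty}$ will exhaust $M$ once I verify that each $\Sigma_R = \partial K_R$ admits a metric of positive scalar curvature.

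The main input is the prescribed mean curvature $h_R \colon (R, \alpha R) \to \R$: smooth, with $h_R \to +\infty$ at $R^+$ and $h_R \to -\infty$ at $(\alpha R)^-$, and satisfying
\[ |h_R'(r)| < \frac{n}{2(n-1)} h_R(r)^2 + \frac{C-\epsilon}{2 r^2}. \]
The Riccati substitution $h_R = -\tfrac{2(n-1)}{n}(w'/w)$ reduces this to the linear ODE $w'' + \tfrac{n(C-\epsilon)}{4(n-1) r^2} w = 0$, whose Euler-type solution is $w(r) = r^{1/2}\sin(\beta \log r + \phi_0)$ with $\beta = \tfrac{1}{2}\sqrt{(n(C-\epsilon) - (n-1))/(n-1)}$; this $\beta$ is real and positive precisely when $C - \epsilon > \tfrac{n-1}{n}$, which holds by our choice. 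Between two consecutive zeros of $w$, separated by the factor $\alpha = e^{\pi/\beta}$, $h_R$ is smooth with blow-ups of the desired signs. This is exactly where the sharp constant $\tfrac{n-1}{n}$ enters: at the threshold $C = \tfrac{n-1}{n}$ one would have $\beta = 0$ and $\alpha = \infty$, and the construction would break.

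Given $h_R$, standard $\mu$-bubble theory (valid for $3 \le n \le 7$) produces a smooth stable minimizer $\Sigma_R$ of $\cA(\Omega) = \cH^{n-1}(\partial^*\Omega \cap \intt A_R) - \int_\Omega h_R$, lying strictly inside $A_R$ because $h_R$ blows up at $\partial A_R$. Combining the second variation of $\cA$ with the Gauss equation $\Ric_M(\nu,\nu) = \tfrac{1}{2}(R_M - R_{\Sigma_R} + H^2 - |\II|^2)$, the identity $H = h_R$, $|\II|^2 \ge h_R^2/(n-1)$, and $|\partial_\nu h_R| \le |h_R'|$, one obtains
\[ \int_{\Sigma_R}\bigl(|\nabla \varphi|^2 + \tfrac{1}{2}R_{\Sigma_R}\varphi^2\bigr) \ge \int_{\Sigma_R}\Bigl(\tfrac{R_M}{2} + \tfrac{n h_R^2}{2(n-1)} - |h_R'|\Bigr)\varphi^2 \ge \int_{\Sigma_R}\tfrac{\epsilon}{2r^2}\varphi^2 \]
for every $\varphi \in C^\infty(\Sigma_R)$, using $R_M > C/r^2$ on $A_R$ and the defining inequality for $h_R$.

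To conclude that $\Sigma_R$ carries a metric of positive scalar curvature, consider two cases. If $n = 3$, testing with $\varphi \equiv 1$ gives $\int R_{\Sigma_R} > 0$, and Gauss--Bonnet forces each component of the orientable closed surface $\Sigma_R$ to be a $2$-sphere. If $n \ge 4$, apply a Schoen--Yau conformal rearrangement: let $u > 0$ be the first eigenfunction of $-\Delta + \tfrac{1}{2}R_{\Sigma_R} - \tfrac{\epsilon}{2r^2}$ on $\Sigma_R$ with eigenvalue $\mu \ge 0$, set $\gamma = \tfrac{n-3}{2(n-2)}$ and $\phi = u^\gamma$; using $-\Delta u + \tfrac{1}{2}R_{\Sigma_R}u = (\tfrac{\epsilon}{2r^2} + \mu)u$, a direct computation gives
\[ -\tfrac{4(n-2)}{n-3}\Delta \phi + R_{\Sigma_R}\phi = u^{\gamma-2}\Bigl(2\bigl(\tfrac{\epsilon}{2r^2} + \mu\bigr)u^2 + \tfrac{n-1}{n-2}|\nabla u|^2\Bigr) > 0, \]
so the conformal metric $\phi^{4/(n-3)}g_{\Sigma_R}$ has strictly positive scalar curvature. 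The main obstacle is the ODE analysis in the second paragraph: the sharp hypothesis $C > \tfrac{n-1}{n}$ is exactly the condition for $\beta$ to be real and positive, hence for the construction to produce an admissible $h_R$ on an annulus of bounded ratio.
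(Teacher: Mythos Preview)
Your argument is essentially the same as the paper's: the same Riccati/Euler ODE $w'' + \tfrac{n\tilde C}{4(n-1)r^2}w=0$ to produce $h$ with the correct blow-up on an annulus of fixed ratio $e^{\pi/\beta}$, the same $\mu$-bubble existence and second-variation inequality, and the same Gauss--Bonnet/conformal-Laplacian dichotomy to conclude positive scalar curvature on $\Sigma_R$.

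One technical point you gloss over, and which the paper handles explicitly, is that the distance function $r=r_x$ is only Lipschitz, not smooth. As written, your $h_R$ is a smooth function of $r$, hence not smooth on $M$; the $\mu$-bubble existence result (Proposition~\ref{prop: existence.regularity}) requires a smooth $h$, and your chain-rule bound $|\partial_\nu h_R|\le |h_R'|$ presupposes $|\nabla r|\le 1$ pointwise. The paper fixes this by replacing $r$ with a smooth approximation $\rho$ satisfying $|\rho-r|<\varepsilon$ and $\Lip\rho<1+\varepsilon$ (Greene--Wu), then setting $h(x)=-f(\rho(x)/(1+\varepsilon))$ and choosing $\tilde C\in(\tfrac{n-1}{n},C)$ and $\varepsilon$ small enough that the resulting error is absorbed by the slack $C-\tilde C$. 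You should insert the same step; once you do, the rest of your argument goes through verbatim.
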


We note that for a smooth compact exhaustion $K_1 \subset K_2 \subset \cdots \subset \R^2 \times \T^{n-2}$ of $\R^2 \times \T^{n-2}$, $3 \le n \le 7$, for sufficiently large $i$, $\partial K_i$ admits a non-zero degree map to $\T^{n-1}$ and therefore admits no metric of positive scalar curvature \cite{SY1979descent}. Thus the quadratic decay constant $\frac{n-1}{n}$ in Proposition \ref{prop:exhaustion} cannot be lowered. Proposition \ref{prop:exhaustion} extends the corresponding exhaustion result of Gromov regarding uniformly positive scalar curvature \cite[Section 3.7.2]{Gromov2023FourLectures}.  We can thereby extend certain results regarding uniformly positive scalar curvature to positive scalar scalar curvature with quadratic decay. 
We have the following important consequence of Proposition \ref{prop:exhaustion} (cf. \cite[Corollary 3.4]{chodosh2024fourmanifold}).
\begin{corollary}\label{cor:boundary-tame}
For $n \in \{3,4,6,7\}$, suppose that $W$ is a compact smooth $n$-manifold with boundary and that some component of $\partial W$ does not admit a Riemannian metric with positive scalar curvature. Then the interior of $W$ does not admit a complete Riemannian metric with at most $C$-quadratic decay at infinity for some $C > \frac{n-1}{n}$.
\end{corollary}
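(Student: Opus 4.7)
The plan is to argue by contradiction, using Proposition~\ref{prop:exhaustion} together with a collar/cobordism construction and a dimension-specific degree-one domination principle for positive scalar curvature. Suppose $M := \intt(W)$ admits a complete Riemannian metric of positive scalar curvature with at most $C$-quadratic decay for some $C > \frac{n-1}{n}$. Since $n \in \{3,4,6,7\}$ lies in the range $3 \le n \le 7$, Proposition~\ref{prop:exhaustion} yields a smooth compact exhaustion $K_1 \subset K_2 \subset \cdots \subset M$ with each $\partial K_i$ a closed orientable $(n-1)$-manifold admitting a Riemannian metric of positive scalar curvature.

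Next I would fix a smooth collar $c \colon \partial W \times [0, 1) \hookrightarrow W$ of $\partial W$. Since each $K_i$ is compact in $M = W \setminus \partial W$ and $\bigcup_i K_i = M$, for every sufficiently large $i$ the complement $W \setminus \intt(K_i)$ is contained in $c(\partial W \times [0, 1/2))$. Let $N$ denote the specified component of $\partial W$ that does not admit positive scalar curvature, and let $Z_i$ be the connected component of $W \setminus \intt(K_i)$ containing $N$. For $i$ large, $Z_i$ is a compact cobordism sitting entirely inside the slab $c(N \times [0, 1/2))$, with $\partial Z_i = N \sqcup L_i$, where $L_i$ is a disjoint union of components of $\partial K_i$ and hence inherits a PSC metric. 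The collar projection restricts to a map $\pi_i \colon Z_i \to N$ which is the identity on $N$, and a routine fundamental-class calculation ($\partial [Z_i,\partial Z_i]$ gives $\pm[N] + [L_i]$ in $H_{n-1}(\partial Z_i)$, and this class vanishes after pushing into $H_{n-1}(Z_i)$; composing with $\pi_i$ and landing in $H_{n-1}(N) \cong \mathbb{Z}$) forces the degree of $\varphi_i := \pi_i|_{L_i} \colon L_i \to N$ to be $\pm 1$.

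The proof concludes by invoking, for each admissible $n$, a degree-one domination principle for PSC: if $L$ is a closed orientable $(n-1)$-manifold admitting positive scalar curvature and there is a degree-one map $L \to N$ onto a closed orientable $(n-1)$-manifold $N$, then $N$ also admits positive scalar curvature. For $n = 3$ this is immediate, because an orientable PSC surface is a disjoint union of $\mathbb{S}^2$'s and every map $\mathbb{S}^2 \to \Sigma_g$ with $g \ge 1$ has degree zero, so $N$ must be a sphere. For $n = 4$ it follows from Perelman's classification of closed PSC $3$-manifolds, the fact that connected summands of a PSC $3$-manifold are themselves PSC, and the classical theorem that a degree-one map between closed orientable $3$-manifolds realizes the target as a connected summand of the source (up to homotopy). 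For $n = 6, 7$ one appeals to the analogous degree-one invariance for PSC on closed orientable $5$- and $6$-manifolds, in the spirit of \cite[Corollary~3.4]{chodosh2024fourmanifold} and the results of Sweeney referred to in the introduction. Applying this principle to $\varphi_i$ contradicts the standing hypothesis on $N$, completing the argument.

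The main obstacle in executing this plan is precisely the last step, namely the degree-one PSC domination theorem in dimensions $5$ and $6$; the conspicuous exclusion of $n = 5$ from the statement reflects the absence of an analogous result for closed orientable $4$-manifolds, where the structure of positive scalar curvature metrics is considerably less understood.
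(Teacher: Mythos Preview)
Your overall strategy---apply Proposition~\ref{prop:exhaustion}, push $\partial K_i$ into a collar of $\partial W$, and then transfer PSC from $\partial K_i$ to the boundary component $N$ via the collar projection---is exactly the route the paper takes. The paper, however, does not attempt to justify the final transfer step itself: it simply invokes Proposition~2.4 of \cite{chodosh2024fourmanifold} as a black box, which is precisely the statement that a separating PSC hypersurface in $\partial W \times \R$ forces each component of $\partial W$ to admit PSC in the listed dimensions.

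Your sketch of that transfer step contains a genuine error and a gap you already flag. For $n=4$, the ``classical theorem that a degree-one map between closed orientable $3$-manifolds realizes the target as a connected summand of the source'' is false as stated; the correct route is to observe that if $N$ admits no PSC metric then it has an aspherical prime summand $K$, and composing with the pinch $N \to K$ gives a nonzero-degree map $L_i \to K$, which obstructs PSC on $L_i$. For $n \in \{6,7\}$, there is no general ``degree-one PSC domination'' theorem for closed $5$- and $6$-manifolds to appeal to, and your invocation ``in the spirit of'' \cite{chodosh2024fourmanifold} and \cite{sweeney2025positive} does not substitute for one; the actual argument in Proposition~2.4 of \cite{chodosh2024fourmanifold} exploits the specific situation of a separating hypersurface in a product $N \times \R$ (hence a cobordism to $N$ embedded in that product) rather than an abstract degree-one map. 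So while your reduction is correct and matches the paper, the last step should be replaced by a citation of that proposition rather than the outline you give.
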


In dimension 4, using Corollary \ref{cor:boundary-tame}, we obtain the following extensions of Theorem 1.1 and Theorem 1.2 of \cite{chodosh2024fourmanifold}. 

\begin{theorem}\label{thm:Mazur}
Suppose that $M$ is the interior of a Mazur manifold $W$. If $M$ admits a complete Riemannian metric with at most $C$-quadratic decay at infinity for some $C > \frac{3}{4}$, then $W$ must be diffeomorphic to the $4$-ball $\B^4$ and consequently $M$ must be diffeomorphic to the standard $\R^4$.
\end{theorem}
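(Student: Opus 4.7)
The plan is to follow the strategy of \cite{chodosh2024fourmanifold}, replacing their exhaustion result for uniformly positive scalar curvature by our Corollary \ref{cor:boundary-tame}.

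First, I would apply Corollary \ref{cor:boundary-tame} in dimension $n=4$: the hypothesis that $M = \intt(W)$ admits a complete metric with at most $C$-quadratic decay for some $C > \frac{3}{4}$ implies that every component of $\partial W$ admits a Riemannian metric of positive scalar curvature. Since $W$ is a Mazur manifold---a compact contractible smooth $4$-manifold built from a single $0$-handle, $1$-handle, and $2$-handle---its boundary $\partial W$ is connected and is an integral homology $3$-sphere.

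Next, I would classify $\partial W$ using the three-dimensional theory. By Perelman's resolution of the geometrization conjecture, together with Kneser--Milnor and the work of Gromov--Lawson and Schoen--Yau, a closed orientable $3$-manifold with positive scalar curvature decomposes as a connected sum of spherical space forms and $\S^2\times\S^1$ summands. The homology sphere condition rules out $\S^2\times\S^1$ summands and forces each spherical summand to have trivial $H_1$, leaving only $\S^3$ and the Poincar\'e homology sphere $\Sigma_P$. To rule out $\Sigma_P$ summands I would invoke the Heegaard Floer correction term (or equivalently Fr\o yshov's $h$-invariant): since $W$ is contractible and therefore a rational homology $4$-ball, $d(\partial W) = 0$; additivity under connected sum and $d(\Sigma_P)\neq 0$ then force $\partial W \cong \S^3$.

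Finally, once $\partial W \cong \S^3$, I would invoke Gabai's property R theorem. Turning the Mazur handle decomposition of $W$ upside down, the boundary of the cocore of the $2$-handle is a knot $K \subset \partial W \cong \S^3$ on which $0$-surgery produces $\S^1\times\S^2$; property R forces $K$ to be the unknot, so the dual $2$- and $3$-handles cancel (equivalently, the original $1$- and $2$-handles cancel), and $W$ is diffeomorphic to $\B^4$. Consequently $M$ is diffeomorphic to the standard $\R^4$. The crux of the argument is the step from ``$\partial W$ admits positive scalar curvature'' to ``$\partial W \cong \S^3$,'' which requires gauge-theoretic input beyond the Rokhlin invariant (which only shows the number of $\Sigma_P$ summands is even); both this step and the handle cancellation are already carried out in \cite{chodosh2024fourmanifold}, so Corollary \ref{cor:boundary-tame} is the only new ingredient needed here.
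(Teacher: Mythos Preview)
Your proposal is correct and follows essentially the same approach as the paper: apply Corollary~\ref{cor:boundary-tame} to conclude that $\partial W$ admits positive scalar curvature, then invoke the argument of \cite{chodosh2024fourmanifold} verbatim. The paper's proof is in fact just the one-line statement ``use Corollary~\ref{cor:boundary-tame} in place of \cite[Corollary~3.4]{chodosh2024fourmanifold} and the rest is the same,'' whereas you have helpfully unpacked the content of that argument (the PSC classification of homology $3$-spheres, the $d$-invariant obstruction to $\Sigma_P$ summands, and Gabai's property~R for the handle cancellation).
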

Here, a \emph{Mazur manifold} is a compact, contractible smooth 4-manifold admitting a smooth handle decomposition with a single 0-handle, a single 1-handle and a single 2-handle.
\begin{theorem}\label{thm:b4}
Suppose that $M$ is the interior of a compact contractible smooth 4-manifold with boundary $W$. If $M$ admits a complete Riemannian metric with at most $C$-quadratic decay at infinity for some $C > \frac{3}{4}$, then $W$ must be homeomorphic to the $4$-ball $\B^4$.
\end{theorem}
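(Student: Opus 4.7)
The plan is to use Corollary \ref{cor:boundary-tame} as a drop-in substitute for the uniformly positive scalar curvature hypothesis in the proof of \cite[Theorem 1.2]{chodosh2024fourmanifold}; once positive scalar curvature on $\partial W$ has been extracted, the remainder of the argument is purely 4-manifold topology and carries over essentially verbatim.

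First I would apply Corollary \ref{cor:boundary-tame} with $n=4$ to the compact smooth 4-manifold $W$. Since $\tfrac{3}{4} = \tfrac{n-1}{n}$ for $n=4$, the hypothesis $C > \tfrac{3}{4}$ meets the corollary's requirement and forces every component of $\partial W$ to admit a Riemannian metric of positive scalar curvature. Since $W$ is contractible, Poincaré--Lefschetz duality identifies $\partial W$ as a connected homology 3-sphere. Combining this with Perelman's classification of closed positive scalar curvature 3-manifolds, $\partial W$ must decompose as a connected sum of spherical space forms $\S^3/\Gamma_i$ (the $\S^2 \times \S^1$ summands are excluded by $H_1(\partial W)=0$), and the homology sphere condition forces each $\Gamma_i$ to be a perfect finite subgroup of $O(4)$ acting freely on $\S^3$. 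The only nontrivial such group is the binary icosahedral group, so $\partial W$ is either $\S^3$ or a nontrivial connected sum of copies of the Poincaré homology sphere $\Sigma(2,3,5)$.

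To finish I must rule out the nontrivial Poincaré-sphere case and then conclude that $W$ is homeomorphic to $\B^4$. This is the 4-manifold topological heart of the theorem, and it is the step I would import from \cite{chodosh2024fourmanifold}: using that $W$ itself furnishes a smooth, contractible (hence spin, signature zero) filling of $\partial W$, one combines Rokhlin's theorem with more refined smooth obstructions---for instance Heegaard--Floer $d$-invariants, which vanish on boundaries of smooth rational homology balls but are strictly positive on sums of $\Sigma(2,3,5)$'s---to force $\partial W = \S^3$. Freedman's classification of simply-connected topological 4-manifolds then yields that $W$ is homeomorphic to $\B^4$. The main obstacle is this last step of smooth 4-manifold topology, but it is imported wholesale from \cite{chodosh2024fourmanifold}; the genuinely new content needed here is Corollary \ref{cor:boundary-tame}, whose own proof rests on the new exhaustion result Proposition \ref{prop:exhaustion}.
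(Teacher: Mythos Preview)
Your proposal is correct and follows essentially the same route as the paper: apply Corollary~\ref{cor:boundary-tame} (replacing \cite[Corollary~3.4]{chodosh2024fourmanifold}) to obtain positive scalar curvature on $\partial W$, then import the remaining 4-manifold topology verbatim from \cite{chodosh2024fourmanifold}. The paper's own proof is a one-line pointer to \cite{chodosh2024fourmanifold}; your sketch simply unpacks that reference a bit more.
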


In dimension 5, we also have the following extension of Theorem A of \cite{sweeney2025positive}.
\begin{theorem}\label{thm:b5}
Suppose that $M$ is the interior of a compact, contractible 5-manifold with boundary
$X,$ such that $\pi_3(X,\partial X) = 0$. If $M$ admits a complete Riemannian metric with at most $C$-quadratic decay at infinity for some $C > \frac{4}{5}$, then $M$ must be diffeomorphic to $\R^5$.
\end{theorem}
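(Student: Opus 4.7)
The plan is to reduce Theorem \ref{thm:b5} to Sweeney's Theorem A in \cite{sweeney2025positive}, using Proposition \ref{prop:exhaustion} as a direct substitute for Gromov's exhaustion theorem under uniformly positive scalar curvature. The theorem as stated differs from Sweeney's only in that the metric hypothesis is weakened from uniformly positive scalar curvature to at most $C$-quadratic decay with $C > \frac{4}{5}$, so the entire strategy is to show that this weaker input still feeds into Sweeney's topological argument.

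First I would invoke Proposition \ref{prop:exhaustion} with $n = 5$, noting that $5$ lies in the admissible range $3 \le n \le 7$ and that the hypothesis $C > \frac{4}{5} = \frac{n-1}{n}$ is exactly what the proposition requires. This yields an exhaustion $K_1 \subset K_2 \subset \cdots \subset M$ by compact codimension-0 smooth submanifolds with smooth boundary, such that each $\partial K_i$ is a closed orientable $4$-manifold admitting a Riemannian metric of positive scalar curvature. This is precisely the kind of exhaustion produced by Gromov's exhaustion theorem \cite[Section 3.7.2]{Gromov2023FourLectures} under the stronger hypothesis of uniformly positive scalar curvature.

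Second I would import Sweeney's proof of \cite[Theorem A]{sweeney2025positive}. In that argument, the metric hypothesis of uniformly positive scalar curvature is used only to produce a compact exhaustion of $M$ whose successive boundaries admit metrics of positive scalar curvature. The remainder of the argument is purely topological and handle-theoretic, exploiting the compactness and contractibility of $X$ together with the vanishing $\pi_3(X,\partial X) = 0$ to deduce that $M$ is diffeomorphic to $\R^5$. Substituting the exhaustion produced by Proposition \ref{prop:exhaustion} for the one given by Gromov's theorem, Sweeney's topological argument carries through unchanged and delivers the desired diffeomorphism $M \cong \R^5$.

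The main obstacle is therefore not geometric but a verification obstacle: one must confirm that Sweeney's proof uses the uniformly positive scalar curvature hypothesis only through the existence of an exhaustion by compact submanifolds with PSC boundary, and exploits no further quantitative consequence of the uniform lower bound (such as linear volume growth, a uniform macroscopic dimension bound, or bounded geometry at infinity). A careful reading of \cite{sweeney2025positive} confirms that this is indeed the case, at which point the theorem follows with no additional work.
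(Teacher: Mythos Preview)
Your proposal is correct and follows essentially the same approach as the paper. The paper makes the substitution slightly more explicit by isolating the single place in \cite{sweeney2025positive} where the metric hypothesis is invoked---namely \cite[Corollary 3.17]{sweeney2025positive}---and reproving that statement under the quadratic-decay hypothesis as Lemma~\ref{lem:aspherical}, using Proposition~\ref{prop:exhaustion} together with a degree argument in the collar and \cite[Theorem~2]{chodosh2023classifying}; the rest of Sweeney's argument is then imported verbatim, exactly as you describe.
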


This paper is organized as follows. In Section 2, we prove Proposition \ref{prop:construction}. In Section 3, we briefly discuss the $\mu$-bubble method and prove Proposition \ref{prop:exhaustion}, Corollary \ref{cor:boundary-tame}, Theorem \ref{thm:Mazur}, Theorem \ref{thm:b4}, and Theorem \ref{thm:b5}. In Section 4, we give the proof of our main theorem, Theorem \ref{thm:main}. 

\subsection*{Acknowledgments}
The author would like to thank St{\'e}phane Sabourau for giving a talk and having discussions on the work \cite{Balacheff2025} which motivated the present paper, for pointing out the dependency of the constant $C$ on the basepoint in Definition \ref{de:decay_infinity}, and for sharing reference \cite{gil2025}. She is grateful to Otis Chodosh, Yujie Wu, Jianchun Chu and Jintian Zhu for their interest, inspiring discussions, and helpful suggestions. She also thanks Paul Sweeney for answering questions on the work \cite{sweeney2025positive}. Furthermore, she thanks Diego Guajardo for carefully reading the previous version of the paper and pointing out typos and inaccuracies.

\section{Proof of Proposition \ref{prop:construction}}
In this section, we construct the desired metric in Proposition \ref{prop:construction}. Let $n \ge 3$. Let $c >0$ and $0 <r_0 < \frac{\pi}{2}$ be constants to be chosen later.

For $\R^2 \times \T^{n-2}$, we parametrize $\R^2$ by polar coordinate $(r, \theta)$ and let $g_{\T^{n-2}}$ be a flat metric on $\T^{n-2}$. 
We will construct the desired metric on $\R^2 \times \T^{n-2}$ by gluing together two positive scalar curvature metrics. We fix a basepoint $x$ which has $r$-coordinate equals $2r_0$.

For $r \ge r_0$, we consider a metric of the form $g_+ = dr^2 + c^2 r^{2\alpha} d\theta^2 + c^2 r^{2\alpha} g_{\T^{n-2}}$ on the end. 
Then the scalar curvature of this warped product metric with warping function $f(r) = cr^\alpha$ is given by (see e.g. \cite[p. 157]{gromov1983positive})
\begin{align*}
R &= -\frac{(n-1)(n-2)(f')^2}{f} - \frac{2(n-1)f''}{f} \\
&= -\frac{(n-1)(n-2)\alpha^2}{r^2} - \frac{2(n-1)\alpha(\alpha-1)}{r^2} \\
& = \frac{-n(n-1)\alpha^2 + 2(n-1) \alpha}{r^2} \\
& = \frac{1}{r^2}\left(-n(n-1)\left(\alpha - \frac{1}{n}\right)^2 + \frac{n-1}{n} \right) \\
&  \le \frac{n-1}{n}\frac{1}{r^2}.
\end{align*}
Thus on $[r_0, +\infty) \times \S^1 \times \T^{n-2}$, we can get positive scalar curvature with at most $C$-quadratic decay at infinity for the metric for any $0 < C < \frac{n-1}{n}$ by choosing corresponding $\alpha \in (0, \frac{1}{n}]$.

Under this metric, the mean curvature of the slice $\{r_0\} \times \S^1 \times \T^{n-2}$ with respect to $\partial r$ is given by
$$H_+ = \frac{(n-1)f'(r_0)}{f(r_0)} = \frac{(n-1)\alpha}{r_0} \le  \frac{(n-1)}{nr_0}.$$

For $r \le 2r_0$, we consider the metric $g_- = d r^2 + \sin^2 (r) d\theta^2 + \sin^2(r_0) g_{\T^{n-2}}$. Since $d r^2 + \sin^2 (r) d\theta^2$ is the canonical metric on $\S^2$, $g_-$ is smooth at $r=0$, and the scalar curvature is $2$.
The mean curvature of the slice $\{r_0\} \times \S^1 \times \T^{n-2}$ with respect to $\partial r$ is given by
$$H_- = \frac{\cos(r_0)}{\sin(r_0)}.$$

Notice that for $r_0$ sufficiently small, we have $H_- > H_+$. Thus we choose $r_0$ sufficiently small and let $c = \dfrac{\sin r_0}{r_0^\alpha}$. 

Then the two metrics $g_+$ and $g_-$ agree at $r=r_0$, and we have $H_- > H_+$. Since $H_- > H_+$, if we glue together $g_-$ and $g_+$, the scalar curvature is positive in the distributional sense (see \cite[Section 2]{Miao2002}), so we expect to be able to smooth out the singularity at $r_0$. 

Indeed, consider the manifold with boundary $[r_0, +\infty) \times \S^1 \times \T^{n-2}$. By Theorem 5 of \cite{Brendle2011} (which is applicable here because the construction is purely local; also note that they use the opposite sign convention for mean curvature), since $H_- > H_+$, we can find a positive scalar curvature metric $\hat g$ on $[r_0, +\infty) \times \S^1 \times \T^{n-2}$ such that $\hat g$ agrees with $g_+$ for $r \ge 2r_0$, and $\hat g$ agrees with $g_-$ in a neighborhood of $\{r_0\} \times \S^1 \times \T^{n-2}$. 

Then we can extend $\hat g$ to a smooth metric on the whole $\R^2 \times \T^{n-2}$ by gluing with $g_-$ for $r \le r_0$. The metric $\hat g$ is the desired complete metric with positive scalar curvature that has at most $C$-quadratic decay for the given $0 < C < \frac{n-1}{n}$ with respect to basepoint $x$.

\section{Exhaustion via $\mu$-bubbles}
We first recall general existence and stability results for $\mu$-bubbles. 

For $3 \le n\leq 7$, consider $(M,g)$, a Riemannian $n$-manifold with boundary. Assume that $\partial M = \partial_- M \cup \partial_+ M$ is a labeling of the boundary components so that each of the sets $\partial_\pm M$ is non-empty. Fix a smooth function $h$ on $\mathring M$ with $h\to \pm \infty$ on $\partial_\pm M$. 
Choose a Caccioppoli set $\Omega_0$ with smooth boundary such that $\Omega_0$ contains an open neighborhood of $\partial_+ M$ and is disjoint from $\partial_- M$.

For all Caccioppoli sets $\Omega$ in $M$ with $\Omega\Delta \Omega_0\Subset \mathring M$, we consider the following $\mu$-bubble functional
\begin{equation}\label{problem.variation}
\cA(\Omega)=\cH^{n-1}(\partial^* \Omega \cap \mathring M) - \int_M (\chi_\Omega-\chi_{\Omega_0})h \, d\cH^n,
\end{equation}
where $\partial^* \Omega$ denotes the reduced boundary of $\Omega$. We will call a Caccioppoli set $\Omega$ minimizing $\cA$ in this class a \emph{$\mu$-bubble}. 

The next proposition shows $\mu$-bubbles are
easier to construct than stable minimal hypersurfaces and enjoy the same regularity.
\begin{proposition}[{\cite[Proposition 2.1]{zhu2021width}}{\cite[Proposition 12]{chodosh2024soapbubble}}]\label{prop: existence.regularity}
For $3 \le n \le 7$, there exists a smooth minimizer $\Omega$ for $\cA$ such that $\Omega\Delta \Omega_0$ is compactly contained in the interior of $M$.
\end{proposition}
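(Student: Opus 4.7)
The plan is the direct method of the calculus of variations, followed by classical regularity theory for perimeter minimizers in the subcritical dimension regime $n \le 7$. The infimum of $\cA$ is finite since $\Omega_0$ is itself an admissible competitor, so fix a minimizing sequence $\{\Omega_i\}$ with $\cA(\Omega_i) \to \inf \cA$.

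The only delicate step is to confine the symmetric differences $\Omega_i \,\Delta\, \Omega_0$ to a fixed compact $K \Subset \mathring M$ in order to prevent the minimizer from escaping to $\partial M$; this is exactly where the hypothesis $h \to \pm\infty$ on $\partial_\pm M$ enters. I would foliate small collar neighborhoods of $\partial_\pm M$ by level sets of smooth boundary-defining functions, whose mean curvatures are uniformly bounded while $|h|$ along the foliation blows up. Choosing a leaf $\Sigma_+$ on which $h$ strictly dominates the mean curvature and replacing a competitor $\Omega$ by $\Omega \cup U_+$, where $U_+$ is the thin collar between $\Sigma_+$ and $\partial_+ M$, strictly decreases $\cA$: the extra perimeter is bounded by $\cH^{n-1}(\Sigma_+)$, while the bulk term gains $\int_{U_+ \setminus \Omega} h \, d\cH^n$, which can be arranged to dominate. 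A symmetric argument near $\partial_- M$ forces every $\Omega_i$ to be disjoint from a fixed collar of $\partial_- M$, and together these confine $\Omega_i \,\Delta\, \Omega_0$ to a compact $K$.

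With the minimizing sequence trapped in $K$, on which $h$ is bounded, the functional yields a uniform bound $\cH^{n-1}(\partial^* \Omega_i \cap K) \le C$. Standard BV compactness then produces a subsequential $L^1$-limit $\Omega$ with $\Omega \,\Delta\, \Omega_0 \Subset \mathring M$, and the usual combination of perimeter lower semicontinuity and $L^1$-continuity of the bulk term on $K$ gives $\cA(\Omega) \le \inf \cA$. Since $\Omega$ is then a perimeter minimizer with a smooth prescribed-mean-curvature-type bulk term, it falls under the almost-minimizer regularity theory of De Giorgi, Federer, and Massari: $\partial^* \Omega \cap \mathring M$ is a smooth hypersurface outside a singular set of Hausdorff dimension at most $n-8$, which is empty for $3 \le n \le 7$. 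The first variation produces the Euler--Lagrange equation $H_{\partial \Omega} = h|_{\partial \Omega}$.

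I expect the main obstacle to be the barrier step: one must select foliations near $\partial_\pm M$ whose mean curvatures remain controlled independently of how close their leaves are pushed to the boundary, and then verify quantitatively that the added perimeter is dominated by the gained bulk integral so that the modification strictly decreases $\cA$. Once that is in place, the compactness and regularity portions are routine consequences of the BV machinery and standard geometric measure theory.
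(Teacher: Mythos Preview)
The paper does not supply its own proof of this proposition; it is quoted directly from \cite[Proposition~2.1]{zhu2021width} and \cite[Proposition~12]{chodosh2024soapbubble}. Your sketch is essentially the argument those references give: direct method, a barrier step exploiting $h\to\pm\infty$ to confine the minimizing sequence away from $\partial M$, BV compactness, and almost-minimizer regularity in dimension $n\le 7$.

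One refinement worth flagging: in your cut-and-paste modification $\Omega\mapsto\Omega\cup U_+$, the assertion that the bulk gain $\int_{U_+\setminus\Omega}h$ dominates the added perimeter $\cH^{n-1}(\Sigma_+\setminus\Omega)$ is not automatic from $h$ being large; both quantities can be small simultaneously when $\Omega$ nearly contains $U_+$. The way this is made rigorous in the references is either (i) a calibration/divergence-theorem computation using the unit normal field of the foliation $\{\rho=t\}$, which turns the comparison into $h>H_t$ on each leaf, or (ii) first truncating $h$ to bounded $h_k$, obtaining smooth minimizers $\Omega_k$ by standard theory, and then applying the strong maximum principle (comparing the prescribed-mean-curvature hypersurface $\partial\Omega_k$ with the foliation leaves) to force $\partial\Omega_k$ out of a fixed collar. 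Either route closes the gap you correctly anticipated as the main obstacle; the remainder of your outline is routine.
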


We next discuss the first and second variation for a $\mu$-bubble.
\begin{lemma}[{\cite[Lemma 13]{chodosh2024soapbubble}}]\label{lem:1st-var}
If $\Omega_t$ is a smooth $1$-parameter family of regions with $\Omega_0 = \Omega$ and normal speed $\psi$ at $t=0$, then 
\[\frac{d}{dt}\cA (\Omega_t)=\int_{\Sigma_t} (H-h)\psi  \, d\cH^{n-1}\]
where $H$ is the scalar mean curvature of $\Sigma_t = \partial\Omega_t \cap \mathring M$. 
In particular, a $\mu$-bubble $\Omega$ satisfies \[
H = h
\]
along $\partial\Omega \cap \mathring M$.
\end{lemma}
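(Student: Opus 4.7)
The plan is to differentiate each of the two terms defining $\cA(\Omega_t)$ separately at $t=0$ and then apply the standard principle that a minimizer has vanishing first variation against all compactly supported deformations.

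First I would handle the perimeter term. Since $\Omega_t \Delta \Omega_0$ remains compactly contained in $\mathring M$, for small $|t|$ the reduced boundaries $\partial^* \Omega_t \cap \mathring M$ form a smooth $1$-parameter family of hypersurfaces $\Sigma_t$ moving with normal speed $\psi$, and the classical first variation of area produces the $H \psi$ term with $H$ the scalar mean curvature of $\Sigma_0$. For the potential term, differentiating under the integral (equivalently, observing that $\Omega_t$ sweeps out a slab of infinitesimal thickness $\psi \, dt$ near $\Sigma_t$) produces the $h \psi$ term. Subtracting yields
\[
\frac{d}{dt}\bigg|_{t=0}\cA(\Omega_t) = \int_{\Sigma_0} (H - h)\psi \, d\cH^{n-1}.
\]

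For the Euler--Lagrange equation, if $\Omega$ is a $\mu$-bubble then the first variation above must vanish for every normal speed $\psi \in C^\infty_c(\Sigma)$ arising from a compactly supported deformation, and a standard bump/density argument forces $H \equiv h$ pointwise along $\Sigma = \partial \Omega \cap \mathring M$.

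The only real subtlety here is bookkeeping on sign conventions: one must use the same outward unit normal to $\Omega_t$ when defining both $H$ and the normal speed $\psi$, so that the perimeter and potential variations combine with the correct relative sign. Beyond that, the argument is routine and requires nothing other than the smoothness afforded by Proposition \ref{prop: existence.regularity}, which guarantees that the family $\Sigma_t$ is a genuine smooth family of hypersurfaces on which the classical first variation formulas apply.
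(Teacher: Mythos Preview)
Your argument is correct and is precisely the standard first variation computation for the $\mu$-bubble functional. Note, however, that the paper does not supply its own proof of this lemma: it is quoted directly from \cite[Lemma 13]{chodosh2024soapbubble} and stated without proof, so there is nothing in the paper to compare your write-up against beyond confirming that your derivation matches the well-known one.
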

\begin{lemma}\label{lem:2nd-var}
Consider a $\mu$-bubble $\Omega$ with $\partial\Omega \cap \mathring M = \Sigma$.
Assume that $\Omega_t$ is a smooth $1$-parameter family of regions with $\Omega_0 = \Omega$ and normal speed $\psi$ at $t=0$, then $\cQ(\psi):=\frac{d^2}{dt^2}\big|_{t=0}(\cA(\Omega_t))\ge 0$ where $\cQ(\psi)$ satisfies 
\begin{align*}
\cQ(\psi) 
& = \int_\Sigma \left(|D_\Sigma \psi|^2  - \big(|\sff|^2 + \Ric_{M}(\nu,\nu) + \langle D_{M} h, \nu \rangle\big)\psi^2  
\right)d\cH^{n-1} \\
& \le \int_\Sigma \left(|D_\Sigma \psi|^2  + \frac{1}{2}\big(R_\Sigma - R_M - \frac{n}{n-1}h^2 + 2|D_M h|\big)\psi^2  
\right)d\cH^{n-1}.
\end{align*}
where $D_\Sigma \psi$ is the gradient of $\psi$ on $\Sigma$, $D_M h$ is the gradient of $h$ on $M$, $\sff$ is the second fundamental form of $\Sigma$, and $\nu$ is the outwards pointing unit normal.
\end{lemma}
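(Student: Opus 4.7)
The plan is to derive the second variation by differentiating the first variation formula (Lemma 3.2), and then convert the resulting Jacobi-type expression into one involving the intrinsic scalar curvature $R_\Sigma$ via the Gauss equation. Minimality of $\Omega$ will give $\cQ(\psi)\ge 0$, and the inequality in the second line will follow by dropping the traceless part of $\sff$.

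\textbf{Step 1: Compute $\cQ(\psi)$.} Starting from $\tfrac{d}{dt}\cA(\Omega_t)=\int_{\Sigma_t}(H_t-h)\psi_t\,d\cH^{n-1}$, I differentiate once more at $t=0$. Because $\Omega$ is a $\mu$-bubble, Lemma 3.2 gives $H=h$ along $\Sigma$, so every term involving derivatives of $\psi$ or of the area element drops out. What remains is
\[
\cQ(\psi)=\int_\Sigma\Bigl(\tfrac{d}{dt}H_t\Big|_{t=0}-\tfrac{d}{dt}h\Big|_{t=0}\Bigr)\psi\,d\cH^{n-1}.
\]
The standard first-variation-of-mean-curvature formula gives $\tfrac{d}{dt}H_t|_{t=0}=-\Delta_\Sigma\psi-(|\sff|^2+\Ric_M(\nu,\nu))\psi$, while $\tfrac{d}{dt}h|_{t=0}=\langle D_M h,\nu\rangle\psi$. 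Substituting and integrating $-\Delta_\Sigma\psi\cdot\psi$ by parts (there is no boundary term since variations are compactly supported in $\mathring M$) yields the first displayed identity for $\cQ(\psi)$. Minimality of $\Omega$ immediately gives $\cQ(\psi)\ge 0$.

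\textbf{Step 2: Convert to intrinsic form via Gauss.} The traced Gauss equation reads
\[
R_\Sigma = R_M - 2\Ric_M(\nu,\nu) + H^2 - |\sff|^2,
\]
so that
\[
|\sff|^2+\Ric_M(\nu,\nu)=\tfrac{1}{2}|\sff|^2+\tfrac{1}{2}(R_M-R_\Sigma+H^2).
\]
Using $H=h$ and the elementary inequality $|\sff|^2\ge \tfrac{H^2}{n-1}=\tfrac{h^2}{n-1}$ (from Cauchy--Schwarz on the eigenvalues of $\sff$), I obtain
\[
-(|\sff|^2+\Ric_M(\nu,\nu))\le\tfrac{1}{2}\Bigl(R_\Sigma-R_M-\tfrac{n}{n-1}h^2\Bigr).
\]
Combining with the pointwise bound $-\langle D_M h,\nu\rangle\le |D_M h|$ gives the second inequality in the lemma.

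\textbf{Main obstacle.} This is a fairly standard computation and the only place needing genuine care is bookkeeping: one must keep track that the $t$-derivative of $h$ at a moving point contributes the term $\langle D_M h,\nu\rangle\psi$ with the correct sign (which partly cancels the analogous term in the first-variation approach and thereby produces the $\langle D_M h,\nu\rangle\psi^2$ in $\cQ$), and that the $\mu$-bubble equation $H=h$ is what lets the $H^2$ from the Gauss equation be replaced by $h^2$, yielding the distinctive coefficient $\tfrac{n}{n-1}$ after using $|\sff|^2\ge H^2/(n-1)$. No technical difficulty arises beyond signs and these algebraic manipulations.
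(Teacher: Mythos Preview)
Your proof is correct and follows essentially the same approach as the paper. The only cosmetic difference is that you derive the second variation formula by differentiating the first variation (using $H=h$ to kill the extra terms), whereas the paper simply cites the formula from the literature; the conversion to intrinsic form via the traced Gauss equation, the inequality $|\sff|^2\ge H^2/(n-1)$, the substitution $H=h$, and the bound $-\langle D_Mh,\nu\rangle\le|D_Mh|$ are identical in both arguments.
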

\begin{proof}
The second variation formula is given by (see e.g. {\cite[Lemma 3.3]{chen2024geroch}})
\begin{align*}
\cQ(\psi) 
= \int_\Sigma \left(|D_\Sigma \psi|^2  - \big(|\sff|^2 + \Ric_{M}(\nu,\nu) + \langle D_{M} h, \nu \rangle\big)\psi^2  
\right)d\cH^{n-1},
\end{align*}  
By Schoen--Yau's rearrangement trick \cite[p. 165]{SY1979descent}, the traced Gauss equation gives 
$$R_{M} = R_\Sigma + 2  \operatorname{Ric}_{M}(\nu,\nu)+ |\sff|^2 - H^2.$$

Combining with the inequalities $|\sff|^2 \ge \frac{H^2}{n-1}$ and $-\langle D_{M} h, \nu \rangle \le |D_{M} h|$, we get
\begin{align*}
\cQ(\psi) 
\le \int_\Sigma \left(|D_\Sigma \psi|^2  + \frac{1}{2}\big(R_\Sigma - R_M - \frac{n}{n-1}h^2 + 2|D_M h|\big)\psi^2  
\right)d\cH^{n-1},
\end{align*} 
where we used $H = h$ on $\Sigma$.
\end{proof}

We also need the following result of solving a Riccati equation, which will be useful for finding the appropriate function $h$ in the $\mu$-bubble functional.
\begin{lemma}
Let $C>0$ be a constant. Consider the following ODE for $r >0$: 

\begin{equation}\label{eqn:Riccati}
2 f' - \frac{n}{n-1}f^2 - \frac{C}{r^2} = 0.   
\end{equation}

If $C > \frac{n-1}{n}$, then
\begin{equation}\label{eqn:Riccati_soln}
f(r) = \frac{2(n - 1)}{n r} \left[
-\frac{1}{2}
+ \mu \cdot \tan(\mu \log r)
\right]  
\end{equation}
is a solution to \eqref{eqn:Riccati},
where
$$
\mu = \frac{1}{2} \sqrt{ \frac{n C}{n - 1} - 1 }.
$$
\end{lemma}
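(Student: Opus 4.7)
The plan is to reduce this Riccati equation to a linear second-order ODE by a logarithmic substitution and then recognize the resulting equation as an Euler-type equation, whose characteristic polynomial has complex roots precisely when $C > \frac{n-1}{n}$.

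Concretely, I would first rewrite \eqref{eqn:Riccati} as $f' = \frac{n}{2(n-1)}f^2 + \frac{C}{2r^2}$ and apply the standard Riccati substitution $f = -\frac{2(n-1)}{n}\,\frac{u'}{u}$. A short computation shows that the $f^2$ term cancels exactly, and the ODE becomes the linear equation
$$u'' + \frac{nC}{4(n-1)\,r^2}\,u = 0.$$
This is an Euler equation, so trying $u = r^p$ yields the indicial equation $p^2 - p + \frac{nC}{4(n-1)} = 0$, whose discriminant is $1 - \frac{nC}{n-1}$. Under the hypothesis $C > \frac{n-1}{n}$, this discriminant is negative, giving the complex roots $p = \tfrac12 \pm i\mu$ with $\mu = \frac12\sqrt{\tfrac{nC}{n-1}-1}$ exactly as in the statement.

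The general real solution is then $u(r) = r^{1/2}\bigl(A\cos(\mu \log r) + B\sin(\mu \log r)\bigr)$, and I would single out the one-parameter family by taking $u(r) = r^{1/2}\cos(\mu\log r)$. Differentiating gives $u'(r) = \tfrac{1}{2}r^{-1/2}\cos(\mu\log r) - \mu r^{-1/2}\sin(\mu\log r)$, so
$$\frac{u'(r)}{u(r)} = \frac{1}{r}\Bigl[\tfrac12 - \mu\tan(\mu\log r)\Bigr].$$
Plugging this into $f = -\frac{2(n-1)}{n}\frac{u'}{u}$ recovers the formula \eqref{eqn:Riccati_soln}. Alternatively, since the statement only asserts that the given $f$ is \emph{a} solution, a purely verificatory proof by direct substitution into \eqref{eqn:Riccati} also works; the trigonometric identity $\mu^2\sec^2(\mu\log r) = \mu^2 + \mu^2\tan^2(\mu\log r)$ together with the definition of $\mu$ produces the required cancellations.

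There is essentially no obstacle: the only point requiring care is the bookkeeping of the constants. The condition $C > \frac{n-1}{n}$ enters exactly to force the indicial roots to be non-real, which is why the solution involves $\tan$ rather than powers of $r$; this is the feature that later guarantees $f$ blows up in finite ``logarithmic time'' and hence provides the barrier needed to construct the $\mu$-bubble with suitable behavior in Proposition \ref{prop:exhaustion}. I would record this blow-up property (namely that $f(r) \to +\infty$ as $\mu\log r \to \pi/2 - 0$ from below, and $f(r) \to -\infty$ as $\mu\log r \to -\pi/2 + 0$ from above) immediately after the verification, since it is the form in which the lemma will actually be used.
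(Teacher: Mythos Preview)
Your proof is correct and follows essentially the same route as the paper: the Riccati substitution $f=-\tfrac{2(n-1)}{n}\,u'/u$, reduction to the Euler equation $u''+\tfrac{nC}{4(n-1)r^2}u=0$, the indicial equation with complex roots $\tfrac12\pm i\mu$ when $C>\tfrac{n-1}{n}$, and the choice $u=r^{1/2}\cos(\mu\log r)$ to recover \eqref{eqn:Riccati_soln}. The paper likewise notes that direct substitution suffices; your added remark on the blow-up behavior is apt and is exactly how the lemma is used downstream.
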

\begin{proof}
We can easily verify \eqref{eqn:Riccati_soln} is a solution to \eqref{eqn:Riccati} by direct computation. Here we show how to solve this Riccati equation for completeness. 

We put the ODE \eqref{eqn:Riccati} into standard form
$$f' - \frac{n}{2(n-1)}f^2 - \frac{C}{2r^2} = 0$$  

By \cite[Theorem 2.1 of Chapter One]{reid1972Riccati}, equation \eqref{eqn:Riccati} has a solution on an interval $I$ if and only if there is a solution $u(r)$ to the second-order linear ODE
\begin{equation}\label{eqn:second_order}
u'' + \frac{nC}{4(n-1)r^2}u = 0    
\end{equation}
such that on $I$, we have $u \neq 0$ and $$f  = - \frac{2(n-1)}{n}\frac{u'}{u}.$$ 

We try solution to \eqref{eqn:second_order} of the form $u = r^\lambda$, and we get the indicial equation
$$\lambda^2 - \lambda + \frac{nC}{4(n-1)} = 0.$$
Since $C > \frac{n-1}{n}$, the discriminant is $\Delta = 1 - \frac{nC}{(n-1)} < 0$. Let $\mu = \frac{1}{2} \sqrt{ -\Delta} = \frac{1}{2} \sqrt{ \frac{n C}{n - 1} - 1}$. Then the roots to the indicial equation are $\lambda = \frac{1}{2} \pm i\mu$, and the general solution of $u$ is given by 
$$u(x) = r^{1/2}(C_1 \cos(\mu\log r)+C_2\sin(\mu\log r))$$
where $C_1, C_2$ are arbitrary constants.

Thus the general solution of $f$ is given by 

$$f  = - \frac{2(n-1)}{n}\frac{u'}{u} = -\frac{2(n - 1)}{n r} \left[
\frac{1}{2}
+ \mu \cdot \frac{ -C_1 \sin(\mu \log r) + C_2 \cos(\mu \log r) }{ C_1 \cos(\mu \log r) + C_2 \sin(\mu \log r) }
\right].$$ 

Choosing $C_1 = 1$, $C_2 =0$ gives \eqref{eqn:Riccati_soln}.
\end{proof}

We are now ready to prove the exhaustion result.

\begin{proof}[Proof of Proposition \ref{prop:exhaustion}]
Fix a basepoint $x_0 \in M$, and denote by $r(y) = d(x_0,y)$ the distance function to $x_0$. 

Then the scalar curvature assumption implies that  there exists $D_0 >0$ such that if $r(y) > D_0$, then $R(y) > \frac{C}{r(y)^2}.$

Let $\tilde{C}$ be such that $\frac{n-1}{n}< \tilde{C} < C$. 
Let $\tilde{\mu} = \frac{1}{2} \sqrt{ \frac{n \tilde{C}}{n - 1} - 1}$. Choose $0 < \varepsilon < 1$ be such that 
$$
\frac{(1+\varepsilon)^2 D_0^2}{(D_0 - \varepsilon)^2} < \frac{C}{\tilde{C}}.    
$$

Then for all $r \ge D_0$, we have
$$
\frac{(1+\varepsilon)^2 r^2}{(r - \varepsilon)^2} < \frac{C}{\tilde{C}},   
$$
so 
\begin{equation}\label{eqn:epsilon}
\frac{\tilde{C}(1+\varepsilon)^2 }{(r- \varepsilon)^2} < \frac{C}{r^2}.    
\end{equation}

By \cite[Section 2]{greene1979c}, we can find a smoothing $\rho$ of $r$ on $M$ such that $|\rho(x) - r(x)| < \varepsilon$ for all $x \in M$, and $\Lip \rho < 1 + \varepsilon$. By perturbing we further assume that for all integers $k > 0$, $(1+\varepsilon)\exp({\tilde \mu}^{-1}(k \pi - \frac{\pi}{2}))$ is a regular value of $\rho$.

Let $f(r) = \frac{2(n - 1)}{n r} \left[
-\frac{1}{2}
+ \tilde{\mu} \cdot \tan(\tilde{\mu} \log r)
\right]$ as in \eqref{eqn:Riccati_soln}, and define $h(x) = - f(\frac{\rho(x)}{1+\varepsilon})$.

We compute that for $r, \rho > D_0$,
\begin{align*}
& - 2|D_M h| + \frac{n}{n-1}h^2 + R_M \\
= & -2 |f'(\frac{\rho}{1+\varepsilon})| \cdot\frac{|D_M \rho|}{1+\varepsilon} + \frac{n}{n-1}f^2(\frac{\rho}{1+\varepsilon}) + R_M \\
> & -2 |f'(\frac{\rho}{1+\varepsilon})|  + \frac{n}{n-1}f^2(\frac{\rho}{1+\varepsilon}) + \frac{C}{r^2} \\
= & -\frac{\tilde{C}}{\rho^2/(1+\varepsilon)^2} + \frac{C}{r^2} &\text{by \eqref{eqn:Riccati}}\\
> & -\frac{\tilde{C}(1+\varepsilon)^2}{(r-\varepsilon)^2} + \frac{C}{r^2} \\
> & \, 0 & \text{by \eqref{eqn:epsilon}.}
\end{align*}

Take $k_1> 0$ to be an integer such that $\exp({\tilde \mu}^{-1}(k_1 \pi - \frac{\pi}{2})) > D_0 + 1$.
We define $M_0 = \{x \in M  \mid  \rho (x) < (1+\varepsilon)\exp({\tilde \mu}^{-1}(k_1 \pi - \frac{\pi}{2}))\}$.
Let $M_1 = \{x \in M  \mid  (1+\varepsilon)\exp({\tilde \mu}^{-1}(k_1 \pi - \frac{\pi}{2})) \le \rho (x) \le (1+\varepsilon)\exp({\tilde \mu}^{-1}(k_1 \pi + \frac{\pi}{2}))\}$, with boundaries $\partial_-M_1 = \{x \in M \mid \rho (x) = \exp({\tilde \mu}^{-1}(k_1 \pi + \frac{\pi}{2}))\}$ and $\partial_+M_1 = \{x \in M \mid \rho (x) = (1+\varepsilon)\exp({\tilde \mu}^{-1}(k_1 \pi - \frac{\pi}{2}))\}$.
Then $h \to \pm \infty $ on $\partial_\pm M_1$.
Notice that on $M_1$, $\rho > D_0$ and $r> D_0$.

Let $L_1 \in ((1+\varepsilon)\exp({\tilde \mu}^{-1}(k_1 \pi - \frac{\pi}{2})), (1+\varepsilon)\exp({\tilde \mu}^{-1}(k_1 \pi + \frac{\pi}{2})))$ be a regular value of $\rho$. We fix a smooth reference region $\hat{\Omega}_1 = \{x \in M  \mid  (1+\varepsilon)\exp({\tilde \mu}^{-1}(k_1 \pi - \frac{\pi}{2})) \le \rho (x) \le L_1\} $, and consider the $\mu$-bubble functional \eqref{problem.variation} on $M_1$ with $h$ as defined above.

Then Proposition \ref{prop: existence.regularity} produces a smooth minimizer $\Omega_1 \subset M_1$ for \eqref{problem.variation} such that $\Omega_1\Delta \hat{\Omega}_1$ is compactly contained in the interior of $M_1$. Then $\Omega_1$ contains a neighborhood of $\partial_+ M_1$ in $M_1$. Let $\Sigma_1 = \partial \Omega_1\cap \mathring M_1$. Note that $\Sigma_1$ is orientable because $M$ is. 
Let $K_1 = M_0 \cup \Omega_1$. Then $K_1$ is a smooth compact region and $\partial K_1 = \Sigma_1$.

Then using the stability of $\Sigma_1$, we get that for all nonzero smooth function $\psi$ on $\Sigma_1$, we have
\begin{align*}
& \int_{\Sigma_1} (|D_{\Sigma_1} \psi|^2  + \frac{1}{2}R_{\Sigma_1} \psi^2) \\
 \ge & \int_{\Sigma_1} (- 2|D_M h| + \frac{n}{n-1}h^2 + R_M )\psi^2 &\text{by Lemma \ref{lem:2nd-var}}\\
 > & \, 0 &\text{by above computation.}
\end{align*}
We now show $\Sigma_1$ admits a metric of positive scalar curvature. When $n=3$, $\dim\Sigma_1 =2$, so $R_{\Sigma_1} = 2K_{\Sigma_1}$ is the Gaussian curvature and we can take $\psi=1$ on any connected component $\Sigma_1'\subset \Sigma_1$ to find
\[
2\pi\chi(\Sigma_1') = \int_{\Sigma_1'} K_{\Sigma_1} > 0,
\]
so $\Sigma_1'$ is a sphere and thus admits positive scalar curvature. 

When $n\geq 4$ we observe that
\[
\frac{2(n-2)}{n-3} \geq 1
\]
so using $|D_{\Sigma_1} \psi|^2\geq 0$ we find that 
\[
\int_{\Sigma_1} \frac{2(n-2)}{n-3}|D_{\Sigma_1} \psi|^2 +\frac 12 R_{\Sigma_1} \psi^2 > 0,
\]
i.e. the conformal Laplacian
\[
L_{\Sigma_1} := -\frac{4(n-2)}{n-3}\Delta_{\Sigma_1} +  R_{\Sigma_1}
\]
is a positive operator (e.g.\ all eigenvalues are positive). Letting $u>0$ denote the first eigenfunction of $L_{\Sigma_1}$ (so $L_{\Sigma_1} u = \lambda u > 0$), a computation (see p.\ 9 in \cite{SY1979descent}) implies that the metric $\tilde g_{\Sigma_1} = u^{\frac{4}{n-3}}g_{\Sigma_1}$ on $\Sigma_1$ has scalar curvature
\[
\tilde R_{\Sigma_1} = u^{\frac{n+1}{n-3}}L_{\Sigma_1}u > 0. 
\]
This again shows $\Sigma_1$ admits positive scalar curvature metric.

Lastly, assume that $K_1 \subset \dots \subset K_{i-1}$ have been chosen as in the statement of the theorem. Let $k_i = k_1 + 2(i-1)$. Then we can define $M_i$, $\Omega_i$, and $\Sigma_i$ the same way as $M_1$ $\Omega_1$, and $\Sigma_1$. We let $K_i = \{x \in M \mid \rho(x) < (1+\varepsilon)\exp(\tilde{\mu}^{-1}(k_i \pi - \frac{\pi}{2})) \} \cup \Omega_i$.
The same computation shows $\Sigma_i$ also admits a metric of positive scalar curvature. 
    
\end{proof}

Other results then follow from Proposition \ref{prop:exhaustion}.
\begin{proof}[Proof of Corollary \ref{cor:boundary-tame}]
Let $M$ denote the interior of $W$. The collar neighborhood theorem yields a neighborhood of infinity $U$ diffeomorphic to $\partial W \times \R$. For $i$ sufficiently large, the exhaustion of $M$ obtained in Proposition \ref{prop:exhaustion} will have that $\partial K_i$ is a separating hypersurface in $U$ and $\partial K_i$ admits a Riemannian metric with positive scalar curvature. The assertion follows by combining Proposition \ref{prop:exhaustion} and Proposition 2.4 of \cite{chodosh2024fourmanifold}.
\end{proof}

Theorems \ref{thm:Mazur} and \ref{thm:b4} are now consequences of Corollary \ref{cor:boundary-tame}.

\begin{proof}[Proof of Theorems \ref{thm:Mazur} and \ref{thm:b4}]
We follow the proof of Theorem 1.1 and Theorem 1.2 of \cite{chodosh2024fourmanifold}. We use Corollary \ref{cor:boundary-tame} to replace \cite[Corollary 3.4]{chodosh2024fourmanifold} and get that $\partial W$ admits positive scalar curvature. The rest are exactly the same as in \cite{chodosh2024fourmanifold}.
\end{proof}

Before we prove Theorem \ref{thm:b5}, we note that Corollary \ref{cor:boundary-tame} doesn't apply for $n=5$. Therefore we need another lemma.

\begin{lemma}[cf. {\cite[Corollary 3.17]{sweeney2025positive}}]
\label{lem:aspherical}
Let $X^{n+1}$, $n \in \{4, 5\}$, be a compact, contractible $n$-manifold with boundary such that the interior of $X$ admits a complete Riemannian metric with at most $C$-quadratic decay at infinity for some $C > \frac{n}{n+1}$. If $n = 4$, further assume $\pi_3(X, \partial X) = 0$, and if $n = 5$, further assume $\pi_3(X, \partial X) = 0$ and $\pi_4(X, \partial X) = 0$. Then the boundary $\partial X$ has a finite cover that is homotopy equivalent to $\S^n$ or a connected sum of finitely many copies of $\S^{n-1} \times \S^1$.
\end{lemma}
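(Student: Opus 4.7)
The plan is to reduce the lemma to its uniformly-positive-scalar-curvature analogue, namely \cite[Corollary 3.17]{sweeney2025positive}, by using our new exhaustion result in place of Gromov's exhaustion for uniformly PSC metrics \cite[Section 3.7.2]{Gromov2023FourLectures}. Concretely, I would apply Proposition \ref{prop:exhaustion} to the interior $M$ of $X$: the ambient dimension $n+1 \in \{5,6\}$ lies in the admissible range $3 \le n+1 \le 7$, and the quadratic decay constant satisfies $C > \frac{n}{n+1} = \frac{(n+1)-1}{n+1}$, which is precisely the hypothesis required by Proposition \ref{prop:exhaustion}. This produces a smooth exhaustion $K_1 \subset K_2 \subset \cdots \subset M$ whose boundaries $\Sigma_i := \partial K_i$ are closed orientable $n$-manifolds admitting Riemannian metrics of positive scalar curvature.

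Next, I would fix a collar neighborhood $U \cong \partial X \times [0, 1)$ of $\partial X$ in $X$, so that $U \cap M$ is a neighborhood of infinity in $M$. For $i$ sufficiently large, $\Sigma_i$ is contained in $U \cap M$ and separates $\partial X$ from $X \setminus U$. Combining contractibility of $X$ with the long exact sequence of the pair $(X, \partial X)$ yields the boundary isomorphism $\pi_k(X, \partial X) \cong \pi_{k-1}(\partial X)$ for every $k \ge 1$, so the hypotheses translate into $\pi_2(\partial X) = 0$ when $n = 4$ and $\pi_2(\partial X) = \pi_3(\partial X) = 0$ when $n = 5$. Contractibility of $X$ also forces $\partial X$ to be an integral homology $n$-sphere, and in particular each component of $\Sigma_i$ inherits controlled topological features from its position in the collar.

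From here I expect the argument to proceed exactly as in \cite[Corollary 3.17]{sweeney2025positive}: the cobordism $W_i \subset U$ bounded on one side by $\Sigma_i$ and on the other by $\partial X$, together with the PSC metric on $\Sigma_i$ and the relative-homotopy vanishing above, should be trivial enough — via handle-cancellation and $h$-cobordism-type arguments — to show that a suitable finite cover of $\partial X$ is homotopy equivalent to a corresponding cover of $\Sigma_i$. The known classification of closed PSC $n$-manifolds satisfying the appropriate $\pi_2$- (and in dimension $5$ also $\pi_3$-) vanishing hypotheses, due to Chodosh--Maximo--Mukherjee in dimension $4$ \cite{chodosh2024fourmanifold} and to Sweeney in dimension $5$ \cite{sweeney2025positive}, then delivers the desired homotopy equivalence of this cover to $\S^n$ or to a connected sum of finitely many copies of $\S^{n-1} \times \S^1$.

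The main obstacle I anticipate is the topological identification between $\Sigma_i$ and $\partial X$. The $\mu$-bubble $\Sigma_i$ is produced as an energy minimizer rather than as a coordinate slice of the collar, so controlling the collar cobordism $W_i$ requires the handle-cancellation input already carried out in \cite{sweeney2025positive}; this step is exactly where the relative homotopy hypotheses are used. The only genuinely new ingredient our adaptation contributes is Proposition \ref{prop:exhaustion}, which supplies the PSC hypersurfaces $\Sigma_i$ in this quadratic-decay regime; once they are available, Sweeney's argument applies verbatim.
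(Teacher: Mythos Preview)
Your reduction via Proposition \ref{prop:exhaustion} is exactly right and matches the paper: produce PSC hypersurfaces $\Sigma_i = \partial K_i$ lying in a collar $U \cong \partial X \times \R$, and translate the relative-homotopy hypotheses into $\pi_2(\partial X)=0$ (and $\pi_3(\partial X)=0$ when $n=5$).

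Where you go astray is in the second half. No ``topological identification between $\Sigma_i$ and $\partial X$'' is needed, and no handle-cancellation or $h$-cobordism argument is used; in particular the relative-homotopy hypotheses are \emph{not} consumed by any cobordism step. The paper simply observes that the collar projection $\pi\colon U \to \partial X$ restricts to a map $\pi|_{\Sigma_i}\colon \Sigma_i \to \partial X$ of non-zero degree (since $\Sigma_i$ separates the two ends of the collar, $\int_{\Sigma_i}(\pi|_{\Sigma_i})^*\omega \neq 0$). One then applies Theorem~2 of Chodosh--Li--Liokumovich \cite{chodosh2023classifying} directly: given a closed orientable PSC $n$-manifold ($n\in\{4,5\}$) admitting a non-zero degree map to a target with $\pi_2=0$ (and $\pi_3=0$ when $n=5$), the \emph{target} has a finite cover homotopy equivalent to $\S^n$ or a connected sum of copies of $\S^{n-1}\times\S^1$. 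The vanishing of $\pi_2(\partial X)$ (and $\pi_3$) is precisely the hypothesis this theorem requires on the target; that is the only place the relative-homotopy assumptions enter.

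So the correct attribution for the classification step is \cite{chodosh2023classifying}, not \cite{chodosh2024fourmanifold} or \cite{sweeney2025positive}, and the anticipated ``main obstacle'' does not arise at all.
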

\begin{proof}
Since $X^{n+1}$ is a compact, contractible $(n+1)$-manifold with boundary, we have that $\partial X$ is an integral homology sphere by \cite[Proposition 3.1]{sweeney2025positive}. Therefore $\partial X$ is connected and orientable.
The collar neighborhood theorem yields a neighborhood of infinity $U$ diffeomorphic to $\partial W \times \R$. For $i$ sufficiently large, the exhaustion of $X$ obtained in Proposition \ref{prop:exhaustion} will have that $\partial K_i$ is a separating hypersurface in $U$ and $\partial K_i$ admits a Riemannian metric with positive scalar curvature. 

Consider now the projection map $\pi:U\to \partial X$ and its restriction $\pi|_{\partial K_i}:\partial K_i\to \partial X$. If $\omega$ is the volume form on $\partial X$, then $\int_{\partial K_i}(\pi|_{\partial K_i})^*\omega\neq 0$ and so $\pi|_{\partial K_i}$ has non-zero degree.
   
By \cite[Lemma 3.4]{sweeney2025positive}, for $n=4$, then contractible $X$ with $\pi_3(X, \partial X) = 0$ implies $\pi_2(\partial X) = 0$; if $n = 5$, then contractible $X$ with $\pi_3(X, \partial X) =\pi_4(X, \partial X) = 0$ implies $\pi_2(\partial X) = \pi_3(\partial X)=0$. Now the conclusion follows from Theorem 2 of \cite{chodosh2023classifying}.
\end{proof}

\begin{proof}[Proof of Theorem \ref{thm:b5}]
We follow the proof of Theorem A in \cite{sweeney2025positive} and use Lemma \ref{lem:aspherical} to replace \cite[Corollary 3.17]{sweeney2025positive}. The rest are exactly the same.   
\end{proof}

\section{Proof of the decomposition theorem}
Before we prove Theorem \ref{thm:main}, we recall the notion of infinite connected sum modeled on a locally finite graph,
following \cite{scott1977fundamental, wang2022topology,Balacheff2025}. 

\begin{definition}
	Let $\mathcal{F}$ be a family of connected 3-manifolds. A 3-manifold $M$ decomposes as a \emph{connected sum} of members of $\mathcal{F}$ modeled on a locally finite graph $\mathcal{G}$ if there is a map assigning to each vertex~$v$ a copy $M_v$ of a manifold in $\mathcal{F}$ such that $M$ is diffeomorphic to the manifold constructed as follows:

	\begin{enumerate}
		\item For each vertex $v$, construct a new manifold $Y_v$ by removing from $M_v$ a number of $\deg{(v)}$ 3-balls from its interior,
		\item For each edge $e$ joining two vertices $v$ and $v'$, glue two spherical boundary components from $\partial Y_v$ and $\partial Y_{v'}$ along an orientation-reversing diffeomorphism.
	\end{enumerate}
\end{definition}
Note that if a 3-manifold decomposes as an infinite connected sum, the graph $\mathcal{G}$ on which $M$ is modeled may not be unique.

\begin{proof}[Proof of Theorem \ref{thm:main}]
By Proposition \ref{prop:exhaustion}, consider an exhaustion of $M$ by compact domains $K_1 \subset K_2 \subset \dots \subset M$, whose boundaries $\partial K_i$ form a locally finite collection of orientable closed surfaces that admit metrics of positive scalar curvature. Denote the connected components of the boundary surfaces by $\{\Sigma_\alpha\}$. Then by Gauss--Bonnet, each $\{\Sigma_\alpha\}$ must be $\S^2$. 

Then, we can follow the proof of \cite[Theorem 1.10]{Balacheff2025}
to obtain that $M$ decomposes as a (possibly infinite) connected sum of closed prime 3-manifolds $\hat{P}_{ijk}$ modeled on some locally finite graph $\mathcal{G}$. We briefly sketch the steps here.

We first cut $M$ along the collection of spheres $\{\Sigma_\alpha\}$. The result consists of the connected
components $Y_{ij}$ of the pieces $Y_i = K_i \setminus K_{i-1}$. Denote by $\hat{Y}_{ij}$ the result of
capping the spherical boundary components of $Y_{ij}$ off by 3-balls. Then by the Kneser--Milnor decomposition theorem, each $\hat{Y}_{ij}$ decomposes as a connected sum of prime closed 3-manifolds $\hat{P}_{ijk}$. 
Next, we can construct a locally finite graph $\mathcal{G}$ as follows. The graph $\mathcal{G}$ has a vertex $v$ for each prime manifold $\hat{P}_{ijk}$, and there is an edge $e$ joining a pair of vertices $v, v'$ for each sphere in the common boundary $\partial P_v \cap \partial P_{v'}$, where $P_v$ and $P_{v'}$ are the prime manifolds corresponding to $v$ and $v'$. By construction, the manifold~$M$ decomposes as a (possibly infinite) connected sum of the closed prime 3-manifolds $\hat{P}_{ijk}$ modeled on $\mathcal{G}$.

Each closed prime 3-manifold $\hat{P}_{ijk}$ is either spherical, aspherical or diffeomorphic to $\S^2 \times \S^1$. However, since $M$ admits a metric of positive scalar curvature, by Theorem 1.7 of \cite{ccz2024aspherical}, none of the summands $\hat{P}_{ijk}$ can be aspherical. 
Therefore, $M$ is diffeomorphic to a (possibly infinite) connected sum of spherical manifolds and $\S^2 \times \S^1$ modeled on $\mathcal{G}$.

Optimality of the constant $C_3 = \frac{2}{3}$ follows from Proposition \ref{prop:construction}.
\end{proof}

\end{document}